\theoremstyle{thmstyleone}%
\newtheorem{theorem}{Theorem}
\newtheorem{proposition}[theorem]{Proposition}%
\theoremstyle{thmstyletwo}%
\newtheorem{remark}{Remark}%
\newtheorem{corollary}{Corollary}%
\newtheorem{lemma}{Lemma}%
\theoremstyle{thmstylethree}%
\begin{document}

\title[Article Title]{Weak convergence of probability measures on hyperspaces with the upper Fell-topology}


\author{\fnm{Dietmar} \sur{Ferger}}\email{dietmar.ferger@tu-dresden.de}



\affil{\orgdiv{Fakult\"{a}t Mathematik}, \orgname{Technische Universit\"{a}t Dresden}, \orgaddress{\street{Zellescher Weg 12-14}, \city{Dresden}, \postcode{01069}, \country{Germany}}}




\abstract{Let $E$ be a locally compact second countable Hausdorff space and $\mathcal{F}$ the pertaining family of all closed sets. We endow $\mathcal{F}$ respectively with the Fell-topology, the upper Fell topology or the upper Vietoris-topology and investigate weak convergence of probability measures on the corresponding hyperspaces with a focus on the upper Fell topology. The results can be transferred to distributional convergence of random closed sets in $E$ with applications to the asymptotic behavior of measurable selection.}

\keywords{Weak convergence, hyperspaces, upper Fell topology, upper Vietoris topology, random closed sets, capacity functionals.}


\pacs[MSC Classification]{60B05,60B10,26E25.}

\maketitle

\section{Introduction}\label{sec1}
To begin with we introduce concepts, which are fundamental for the whole paper: the \emph{Fell-topology}, the \emph{upper Fell-topology} and \emph{weak convergence on topological spaces}.
To introduce the Fell-topologies let $E$ be a non-empty set endowed with a topology $\mathcal{G}$ and the pertaining families $\mathcal{F}$ and $\mathcal{K}$ of all
closed sets and all compact sets, respectively. For an arbitrary subset $A \subseteq E$ one defines
$$
 \mathcal{M}(A):=\{F \in \mathcal{F}: F \cap A = \emptyset\}
$$
and
$$
 \mathcal{H}(A):=\{F \in \mathcal{F}: F \cap A \neq \emptyset\}.
$$
The elements of $\mathcal{M}(A)$ or $\mathcal{H}(A)$ are called \emph{missing sets} or \emph{hitting sets}, respectively, of $A$.
Put
$$
\mathcal{S}_{F}= \{\mathcal{M}(K): K \in \mathcal{K}\} \cup \{\mathcal{H}(G): G \in \mathcal{G}\} \subseteq 2^\mathcal{F}.
$$
and
$$
\mathcal{S}_{uF}= \{\mathcal{M}(K): K \in \mathcal{K}\} \subseteq 2^\mathcal{F}.
$$
Then the topologies on $\mathcal{F}$ generated by $\mathcal{S}_F$ or $\mathcal{S}_{uF}$ are called \emph{Fell-topology} or \emph{upper Fell-topology} and are denoted by $\tau_F$ or $\tau_{uF}$, respectively. The name goes back to J. Fell (1962) \cite{Fell}. The topological spaces $(\mathcal{F}, \tau_{F})$ and $(\mathcal{F}, \tau_{uF})$ are examples of  \emph{hyperspaces}. From now on we assume that the underlying \emph{carrier space} $(E,\mathcal{G})$
is locally compact, second-countable and Hausdorff. Then the hyperspace $(\mathcal{F}, \tau_{F})$ is compact, second-countable and Hausdorff,
confer G. Beer \cite{Beer} for these properties and much more information on the Fell-topology.
Whereas $(\mathcal{F}, \tau_{uF})$ is also compact and second-countable, but it is not Hausdorff, confer Ferger \cite{Ferger0}.

To introduce the second concept let $(X,\mathcal{O})$ be an arbitrary topological space with induced Borel $\sigma$-algebra $\mathcal{B}\equiv\mathcal{B}(X) := \sigma(\mathcal{O})$. Let
$$
 \Pi\equiv\Pi(X,\mathcal{O}):=\{P: P \text{ is a probability measure on } (X,\mathcal{B})\}.
$$
For each open $O \in \mathcal{O}$ consider the \emph{evaluation map} $e_O:\Pi \rightarrow \mathbb{R}$ defined by $e_O(P):=P(O)$ for all $P \in \Pi$.
If $\mathcal{O}_>:=\{(a,\infty): a \in \mathbb{R}\} \cup \{\emptyset,\mathbb{R}\}$ is the \emph{right-order topology} on $\mathbb{R}$, then the initial
topology with respect to the functions $e_O:\Pi \rightarrow (\mathbb{R},\mathcal{O}_>), \; O \in \mathcal{O},$ is called \emph{weak topology} on
$\Pi$ and denoted by $\tau_{weak}$. It goes back to Tops{\o}e \cite{Top}. An introduction to the weak topology can be found in the textbook
of G\"{a}nssler and Stute \cite{Stute}.

Next, let $(A,\le)$ be a directed set and $(P_\alpha)$ be a net in $\Pi$ converging to $P \in \Pi$ in the weak topology:
\begin{equation} \label{wc}
 P_\alpha \rightarrow P \; \text{ in } (\Pi,\tau_{weak}).
\end{equation}
By construction of the initial topology we have that (\ref{wc}) is equivalent to
$$
 e_O(P_\alpha) \rightarrow e_O(P) \; \text{ in } (\mathbb{R},\mathcal{O}_>) \quad \forall \; O \in \mathcal{O},
$$
which in turn by the definitions of $e_O$ and $\mathcal{O}_>$ is equivalent to
\begin{equation} \label{open}
 \liminf_\alpha P_\alpha(O) \ge P(O) \quad \forall \; O \in \mathcal{O}.
\end{equation}
By complementation we further obtain from (\ref{open}) that (\ref{wc}) is equivalent to
\begin{equation}\label{closed}
 \limsup_\alpha P_\alpha(F) \le P(F) \quad \text{for all closed sets } F \text{ in } (X,\mathcal{O}).
\end{equation}
Thus, if $(X,\mathcal{O})$ is metrizable and $P_\alpha$ actually is a sequence, then one can conclude from the Portmanteau-Theorem
that convergence in $(\Pi, \tau_{weak})$ is the same as the well-known \emph{weak convergence} of probability measures on metric spaces, confer, e.g. Billingsley \cite{Bill2}. Therefore, in general we say that
$(P_\alpha)$ \emph{converges weakly} to $P$ on $(X,\mathcal{O})$, if (\ref{wc}) holds and alternatively write for this: $P_\alpha \rightarrow_w P$ on $(X,\mathcal{O})$.

We are now in a position to explain what this paper is about. It is (mainly) about the characterisation of weak convergence $P_\alpha \rightarrow_w P$ on $(\mathcal{F},\tau_{uF})$ and its relation to weak convergence on $(\mathcal{F},\tau_{F})$. Let
$\mathcal{B}_F:=\sigma(\tau_F)$ and $\mathcal{B}_{uF}=\sigma(\tau_{uF})$ be the underlying Borel-$\sigma$ algebras.
It follows from Lemma 2.1.1 in Schneider and Weil \cite{Weil} that these coincide:
\begin{equation} \label{Borel}
\mathcal{B}_F=\mathcal{B}_{uF}.
\end{equation}
Therefore the involved probability measures have the same domain.\\

The paper is organized as follows: In the next section we learn about equivalent characterisations for weak convergence on $(\mathcal{F},\tau_{uF})$. Since the Fell topology is stronger than the upper Fell topology,
weak convergence on $(\mathcal{F},\tau_{F})$ entails this on $(\mathcal{F},\tau_{uF})$ and in general the reversal is not true, but it is under an additional assumption. Moreover, we find a close relationship to weak convergence
on $(\mathcal{F},\tau_{uV})$, where $\tau_{uV}$ is the \emph{upper Vietoris topology}. This is created when $\mathcal{K}$ is replaced by $\mathcal{F}$ in the construction of $\tau_{uF}$. A surprising result is that
every net of probability measures converges weakly on $(\mathcal{F},\tau_{uF})$. And whenever it converges to some $P$ it converges also to every $Q$, which \emph{dominates} $P$. Consequently, the space $(\Pi(\mathcal{F},\tau_{uF}), \tau_{weak})$ is compact and in general not Hausdorff. In section 3 we extend our results to \emph{random closed sets} in $E$ and \emph{measurable selections}. It is shown that the distributions of these selections converge weakly to a \emph{Choquet capacity} in the sense of Ferger \cite{Ferger1}, Definition 1.4. Under a uniqueness assumptions one obtains classical weak convergence in $(E,\mathcal{G})$.
Finally, in section 4 (appendix) we present some statements that are used in our proofs.

\section{Weak convergence of probability measures on the hyperspace $(\mathcal{F}, \tau_{uF})$}
By construction $\mathcal{S}_{uF}$ is a subbase of $\tau_{uF}$. This means that every basic open set
has the form $\bigcap_{i=1}^m \mathcal{M}(K_i)$ for compact sets $K_1,\ldots,K_m$ and $m \in \mathbb{N}$.
But since $\bigcap_{i=1}^m \mathcal{M}(K_i)=\mathcal{M}(\bigcup_{i=1}^m K_i)$ and $K:=\bigcup_{i=1}^m K_i$ is compact,
we see that $\mathcal{S}_{uF}$ actually is a base of $\tau_{uF}$. Consequently each open set $\textbf{O} \in \tau_{uF}$
has the representation
$$
 \textbf{O}=\bigcup_{K \in \mathcal{K}^*} \mathcal{M}(K)
$$
for some subclass $\mathcal{K}^* \subseteq \mathcal{K}$. Therefore, a general $\tau_{uF}$-closed set $\textbf{F}$ can be written as
$$
 \textbf{F}=\bigcap_{K \in \mathcal{K}^*} \mathcal{H}(K).
$$
Thus the equivalence $(\ref{wc}) \Leftrightarrow (\ref{closed})$ immediately yields a first characterisation:\\

\begin{proposition} \label{fc} The following two statements (i) and (ii) are equivalent:
\begin{itemize}
\item[(i)] $P_\alpha \rightarrow_w P$ on $(\mathcal{F},\tau_{uF})$.
\item[(ii)]
\begin{equation} \label{K}
\limsup_\alpha P_\alpha(\bigcap_{K \in \mathcal{K}^*} \mathcal{H}(K)) \le P(\bigcap_{K \in \mathcal{K}^*} \mathcal{H}(K))
\end{equation}
for every collection $\mathcal{K}^* \subseteq \mathcal{K}$ of compact sets in $E$.
\end{itemize}
\end{proposition}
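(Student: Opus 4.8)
The plan is to reduce the statement to the chain of equivalences $(\ref{wc}) \Leftrightarrow (\ref{open}) \Leftrightarrow (\ref{closed})$ that was already derived in the introduction for an arbitrary topological space $(X,\mathcal{O})$, now read off for the particular choice $(X,\mathcal{O}) = (\mathcal{F},\tau_{uF})$. Applying that chain verbatim, statement (i) is equivalent to
$$
 \limsup_\alpha P_\alpha(\mathbf{F}) \le P(\mathbf{F}) \quad \text{for every } \tau_{uF}\text{-closed set } \mathbf{F}.
$$
Here nothing has to be checked about measurability: a closed set lies in the Borel $\sigma$-algebra by definition, and by $(\ref{Borel})$ it is immaterial whether one works with $\mathcal{B}_F$ or $\mathcal{B}_{uF}$, so all the quantities $P_\alpha(\mathbf{F})$ and $P(\mathbf{F})$ are well defined.

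It then remains only to insert the structural description of the $\tau_{uF}$-closed sets established in the paragraph preceding the proposition. On the one hand, every $\tau_{uF}$-closed set $\mathbf{F}$ has the form $\mathbf{F} = \bigcap_{K \in \mathcal{K}^*} \mathcal{H}(K)$ for a suitable $\mathcal{K}^* \subseteq \mathcal{K}$; on the other hand, since $\mathcal{H}(K) = \mathcal{F} \setminus \mathcal{M}(K)$ is $\tau_{uF}$-closed for every compact $K$ (because $\mathcal{M}(K) \in \mathcal{S}_{uF} \subseteq \tau_{uF}$) and arbitrary intersections of closed sets are closed, every set of the form $\bigcap_{K \in \mathcal{K}^*} \mathcal{H}(K)$ is $\tau_{uF}$-closed. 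Hence
$$
 \{\, \mathbf{F} : \mathbf{F} \text{ is } \tau_{uF}\text{-closed} \,\} = \Big\{\, \bigcap_{K \in \mathcal{K}^*} \mathcal{H}(K) : \mathcal{K}^* \subseteq \mathcal{K} \,\Big\},
$$
and substituting this identification into the displayed condition above turns it into exactly $(\ref{K})$, quantified over all $\mathcal{K}^* \subseteq \mathcal{K}$, which is statement (ii).

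Since both implications are carried simultaneously by the single equivalence just described, I expect no genuine obstacle in this argument; it is essentially a bookkeeping step. The only point that deserves a word of care is that the description of the closed sets must be used in both directions (every closed set has this form, and every set of this form is closed), and one should note in passing that the degenerate choices $\mathcal{K}^* = \emptyset$, giving $\mathbf{F} = \mathcal{F}$, and $\mathcal{K}^* = \{\emptyset\}$, giving $\mathbf{F} = \mathcal{H}(\emptyset) = \emptyset$, contribute only the trivial inequalities $1 \le 1$ and $0 \le 0$ to $(\ref{K})$.
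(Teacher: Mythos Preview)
Your argument is correct and coincides with the paper's own justification: the proposition is stated immediately after the paragraph deriving that every $\tau_{uF}$-closed set has the form $\bigcap_{K \in \mathcal{K}^*} \mathcal{H}(K)$, and the paper simply invokes the equivalence $(\ref{wc}) \Leftrightarrow (\ref{closed})$ to conclude. Your write-up is a touch more explicit (spelling out both inclusions in the description of the closed sets and the degenerate choices of $\mathcal{K}^*$), but the route is identical.
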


We say that $(P_\alpha)$ is \emph{asymptotically compact-bounded}, if for each $\epsilon>0$ there exists a $K\in \mathcal{K}$ such that
\begin{equation} \label{ab}
 \liminf_\alpha P_\alpha(\{F \in \mathcal{F}:  F \subseteq K\}) \ge 1-\epsilon.
\end{equation}

Notice that $\{F \in \mathcal{F}:  F \subseteq K\}=(\mathcal{H}(K^c))^c$ is
$\tau_F$-closed, because $K^c \in \mathcal{G}$. Therefore it lies in $\mathcal{B}_F=\mathcal{B}_{uF}$, the domain of the $P_\alpha$.
Similarly, a single probability measure $P$ is said to be \emph{compact-bounded}, if for each $\epsilon>0$ there exists a $K \in \mathcal{K}$ such that
\begin{equation} \label{bP}
 P(\{F \in \mathcal{F}:  F \subseteq K\}) \ge 1-\epsilon.
\end{equation}

\begin{corollary} \label{hitF} If $P_\alpha \rightarrow_w P$ on $(\mathcal{F},\tau_{uF})$ and $(P_\alpha)$ is asymptotically compact-bounded, then
\begin{equation} \label{F}
\limsup_\alpha P_\alpha(\bigcap_{F \in \mathcal{F}^*} \mathcal{H}(F)) \le P(\bigcap_{F \in \mathcal{F}^*} \mathcal{H}(F))
\end{equation}
for every collection $\mathcal{F}^* \subseteq \mathcal{F}$ of closed sets in $E$.

If the limit $P$ is compact-bounded, then the reverse conclusion holds: (\ref{F}) implies that $P_\alpha \rightarrow_w P$ on $(\mathcal{F},\tau_{uF})$ and that $(P_\alpha)$ is asymptotically compact-bounded.
\end{corollary}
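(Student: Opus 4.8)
The plan is to reduce both implications to Proposition~\ref{fc} by \emph{truncating} a closed hitting set to a compact one, using a single large compact set furnished by (asymptotic) compact-boundedness.

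For the forward implication, fix $\mathcal{F}^*\subseteq\mathcal{F}$ and put $\textbf{H}:=\bigcap_{F\in\mathcal{F}^*}\mathcal{H}(F)$. Given $\epsilon>0$, asymptotic compact-boundedness supplies a $K\in\mathcal{K}$ with $\limsup_\alpha P_\alpha(\mathcal{H}(K^c))\le\epsilon$, since $\mathcal{H}(K^c)=\{F\in\mathcal{F}:F\subseteq K\}^c$. The elementary but crucial remark is that for a closed set $F'\subseteq K$ one has $F'\cap F\neq\emptyset\Leftrightarrow F'\cap(F\cap K)\neq\emptyset$; hence, writing $\mathcal{K}^*:=\{F\cap K:F\in\mathcal{F}^*\}\subseteq\mathcal{K}$ and $\textbf{H}_K:=\bigcap_{K'\in\mathcal{K}^*}\mathcal{H}(K')=\bigcap_{F\in\mathcal{F}^*}\mathcal{H}(F\cap K)$, we get the sandwich $\textbf{H}\cap\{F\subseteq K\}\subseteq\textbf{H}_K\subseteq\textbf{H}$ (the right inclusion because $F\cap K\subseteq F$). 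This yields $P_\alpha(\textbf{H})\le P_\alpha(\textbf{H}_K)+P_\alpha(\mathcal{H}(K^c))$; taking $\limsup_\alpha$, invoking Proposition~\ref{fc} (the implication (i)$\Rightarrow$(ii)) for the compact collection $\mathcal{K}^*$, and then $\textbf{H}_K\subseteq\textbf{H}$, one obtains $\limsup_\alpha P_\alpha(\textbf{H})\le P(\textbf{H}_K)+\epsilon\le P(\textbf{H})+\epsilon$. Letting $\epsilon\downarrow0$ establishes (\ref{F}).

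For the reverse implication, I would first note that, $E$ being Hausdorff, every compact set is closed, so $\mathcal{K}\subseteq\mathcal{F}$ and (\ref{F}) specialized to collections $\mathcal{F}^*\subseteq\mathcal{K}$ is exactly (\ref{K}); hence $P_\alpha\rightarrow_w P$ on $(\mathcal{F},\tau_{uF})$ by Proposition~\ref{fc}. For asymptotic compact-boundedness, fix $\epsilon>0$ and use compact-boundedness of $P$ to pick $K_0\in\mathcal{K}$ with $P(\mathcal{H}(K_0^c))\le\epsilon$. By local compactness of $E$ there is a compact $K$ with $K_0\subseteq\mathrm{int}(K)$; put $D:=E\setminus\mathrm{int}(K)$, which is closed and satisfies $K^c\subseteq D\subseteq K_0^c$, hence $\mathcal{H}(K^c)\subseteq\mathcal{H}(D)\subseteq\mathcal{H}(K_0^c)$ and $P(\mathcal{H}(D))\le\epsilon$. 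Applying (\ref{F}) with $\mathcal{F}^*=\{D\}$ then gives $\limsup_\alpha P_\alpha(\{F\not\subseteq K\})=\limsup_\alpha P_\alpha(\mathcal{H}(K^c))\le\limsup_\alpha P_\alpha(\mathcal{H}(D))\le P(\mathcal{H}(D))\le\epsilon$, i.e. $\liminf_\alpha P_\alpha(\{F\subseteq K\})\ge1-\epsilon$, which is (\ref{ab}).

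The step I expect to need the most care is the compact-boundedness claim in the reverse direction: the set $\{F\subseteq K\}$ is neither $\tau_{uF}$-open nor $\tau_{uF}$-closed, and $\mathcal{H}(K^c)$ is the hitting set of an \emph{open} set, so (\ref{F}) cannot be invoked for it directly. The device of inserting the closed set $D=E\setminus\mathrm{int}(K)$ between $K^c$ and $K_0^c$ — so that $\mathcal{H}(D)$ simultaneously dominates $\mathcal{H}(K^c)$ and is cheap under $P$ — circumvents this, and it is here that local compactness of $E$ enters, guaranteeing the enlargement $K_0\subseteq\mathrm{int}(K)$. Measurability of all the hitting sets appearing is standard and is already taken for granted in Proposition~\ref{fc} and in the statement of the corollary, so I would not dwell on it.
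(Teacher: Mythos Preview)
Your proof is correct and follows the same overall strategy as the paper. The forward direction is identical in substance: partition according to $\{F\subseteq K\}$, replace each $\mathcal{H}(F)$ by $\mathcal{H}(F\cap K)$, and apply Proposition~\ref{fc} to the resulting compact collection. For the reverse direction, both arguments deduce weak convergence from $\mathcal{K}\subseteq\mathcal{F}$ and then establish asymptotic compact-boundedness by enlarging the compact $K_0$ from (\ref{bP}) so that a \emph{closed} set sits between the new $K^c$ and $K_0^c$, making (\ref{F}) applicable. The only difference is the enlargement device: the paper uses the metric $r$-neighbourhoods $K\subseteq K^{r-}\subseteq K^r$ together with Lemma~\ref{Kr} (so that $K^r\in\mathcal{K}$), while you use local compactness of the Hausdorff space $E$ directly to produce $K\in\mathcal{K}$ with $K_0\subseteq\mathrm{int}(K)$ and take $D=E\setminus\mathrm{int}(K)$. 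Your route is marginally cleaner, since it bypasses the metric and the appendix lemma; the paper's route has the small advantage that the sandwich set $(K^{r-})^c$ is explicit.
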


\begin{proof}
In Remark \ref{uV} below we will show that the intersections in (\ref{F}) are elements of $\mathcal{B}_{uF}$, so that all probabilities are well-defined. Let $\epsilon>0$ and $\textbf{F}:=\{F \in \mathcal{F}: F \subseteq K\}$ with $K$ as in (\ref{ab}). Then using the partition $\{\textbf{F},\textbf{F}^c\}$ one finds that $\bigcap_{F \in \mathcal{F}^*} \mathcal{H}(F)\subseteq \bigcap_{F \in \mathcal{F}^*} (\mathcal{H}(F) \cap \textbf{F}) \cup \textbf{F}^c$. Since $\mathcal{H}(F) \cap \textbf{F} \subseteq \mathcal{H}(F \cap K)$ for all $F \in \mathcal{F}^*$, we obtain:
\begin{equation} \label{FcapK}
 \limsup_\alpha P_\alpha(\bigcap_{F \in \mathcal{F}^*} \mathcal{H}(F))\le \limsup_\alpha P_\alpha(\bigcap_{F \in \mathcal{F}^*} \mathcal{H}(F\cap K))+\limsup_\alpha P_\alpha(\textbf{F}^c).
\end{equation}
Now, $F \cap K \in \mathcal{K}$, whence by Proposition \ref{fc}
\begin{equation} 
\limsup_\alpha P_\alpha(\bigcap_{F \in \mathcal{F}^*} \mathcal{H}(F\cap K)) \le P(\bigcap_{F \in \mathcal{F}^*} \mathcal{H}(F\cap K)) \le P(\bigcap_{F \in \mathcal{F}^*} \mathcal{H}(F)),
\end{equation}
where the last equality is trivial, because $F \cap K \subseteq F$ and so $\mathcal{H}(F\cap K)\subseteq \mathcal{H}(F)$ for all $F$.
By complementation the condition (\ref{ab}) is equivalent to
\begin{equation} \label{abc}
 \limsup_\alpha P_\alpha(\textbf{F}^c) \le \epsilon.
\end{equation}
Combining (\ref{FcapK})-(\ref{abc}) we arrive at
$$
 \limsup_\alpha P_\alpha(\bigcap_{F \in \mathcal{F}^*} \mathcal{H}(F))\le P(\bigcap_{F \in \mathcal{F}^*} \mathcal{H}(F))+\epsilon \quad \forall \; \epsilon>0.
$$
Taking the limit $\epsilon \rightarrow 0$ yields the assertion (\ref{F}).

As to the reverse implication let $\epsilon>0$ and $K \in \mathcal{K}$ as in (\ref{bP}). Fix some $r>0$ and consider the open $r$-neighborhood $K^{r-}$ and the closed $r$-neighborhood $K^r$ of $K$, see in the appendix for their definitions. Notice that $K \subseteq K^{r-} \subseteq K^r$. It follows:
\begin{eqnarray*}
& & \limsup_\alpha P_\alpha(F \in \mathcal{F}: F \nsubseteq K^r) = \limsup_\alpha P_\alpha(F \in \mathcal{F}: F \cap (K^r)^c \neq \emptyset)\\
&=& \limsup_\alpha P_\alpha(\mathcal{H}((K^r)^c))\\
&\le& \limsup_\alpha P_\alpha(\mathcal{H}((K^{r-})^c))   \quad \text{ because } K^r \supseteq K^{r-}\\
&\le& P(\mathcal{H}((K^{r-})^c)) \quad \text{ by (\ref{F}) with } \mathcal{F}^*=\{(K^{r-})^c\} \subseteq \mathcal{F}, \text{ because } K^{r-}\in \mathcal{G}\\
&\le& P(\mathcal{H}( K^c)) \quad \text{ because } K^{r-} \supseteq K\\
&=&  1-P(\mathcal{M}(K^c))=1-P(\{F \in \mathcal{F}:  F \subseteq K\}) \le \epsilon \quad \text{ by } (\ref{bP}).
\end{eqnarray*}
After complementation this shows asymptotic compact-boundedness of $(P_\alpha)$ upon noticing that $K^r$ is compact by Lemma \ref{Kr}. Since $\mathcal{K} \subseteq \mathcal{F}$, condition (\ref{F}) entails condition (\ref{K}).
Therefore an application of Proposition \ref{fc} yields that $P_\alpha \rightarrow_w P$ on $(\mathcal{F},\tau_{uF})$.
\end{proof}

\begin{remark} \label{uV} Let $\tau_{uV}$ be the \emph{upper Vietoris topology}. This is generated by $\mathcal{S}_{uV}:=\{\mathcal{M}(F): F \in \mathcal{F}\}$.
Notice that
\begin{equation} \label{HBinF}
\mathcal{H}(B) \in \mathcal{B}_F \text{ for all Borel-sets } B \in \mathcal{B}(E),
\end{equation}
confer Matheron \cite{Matheron}, p.30. Thus by Lemma 2.1.1 in Schneider and Weil \cite{Weil} and (\ref{HBinF}) the Borel-$\sigma$ algebra $\mathcal{B}_{uV}:=\sigma(\tau_{uV})$, like $\mathcal{B}_{uF}$, is the same as $\mathcal{B}_F$. Similarly as for $\mathcal{S}_{uF}$ one shows that $\mathcal{S}_{uV}$ is a base for $\tau_{uV}$, whence the family $\{\bigcap_{F \in \mathcal{F}^*} \mathcal{H}(F)), \mathcal{F}^* \subseteq \mathcal{F}\}$
is exactly the family of all $\tau_{uV}$-closed sets. In particular, these intersections are Borel-sets, i.e., they are elements of $\mathcal{B}_{uV}=\mathcal{B}_F=\mathcal{B}_{uF}$. As a further consequence we obtain that (\ref{F}) is equivalent to
$P_\alpha \rightarrow_w P$ on $(\mathcal{F},\tau_{uV})$.
\end{remark}

\vspace{0.3cm}
Recall that the family of sets occurring in (\ref{K}) coincides with the family of all $\tau_{uF}$-closed sets. Our next result shows that one can
reduce this family significantly. For its formulation we need the following denotation:
For a set $\textbf{A} \subseteq \mathcal{F}$ the boundary of $\textbf{A}$ with respect to the Fell-topology is denoted by $\partial_F\textbf{A}$.\\

\begin{theorem} \label{thm1} The following statements (i)-(iii) are equivalent:
\begin{itemize}
\item[(i)] $P_\alpha \rightarrow_w P$ on $(\mathcal{F},\tau_{uF})$.
\item[(ii)]
\begin{equation} \label{fK}
\limsup_\alpha P_\alpha(\bigcap_{i=1}^m \mathcal{H}(K_i)) \le P(\bigcap_{i=1}^m \mathcal{H}(K_i))
\end{equation}
for every $m \in \mathbb{N}$ and every finite collection $K_1,\ldots,K_m$ of non-empty compact sets in $E$.
\item[(iii)] The inequality (\ref{fK}) holds for every $m \in \mathbb{N}$ and every finite collection $K_1,\ldots,K_m$ of non-empty compact sets in $E$
such that $P(\partial_F \mathcal{H}(K_i))=0$ for all $1 \le i \le m$.

\end{itemize}
\end{theorem}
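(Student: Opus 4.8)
The plan is to prove the closed chain (i)$\,\Rightarrow\,$(ii)$\,\Rightarrow\,$(iii)$\,\Rightarrow\,$(ii)$\,\Rightarrow\,$(i), where the first and third arrows are immediate and the other two carry the content. Indeed, (i)$\,\Rightarrow\,$(ii) is just Proposition \ref{fc} specialised to the collections $\mathcal{K}^*=\{K_1,\dots,K_m\}$, because $\bigcap_{i=1}^m\mathcal{H}(K_i)=\bigcap_{K\in\mathcal{K}^*}\mathcal{H}(K)$; and (ii)$\,\Rightarrow\,$(iii) holds trivially, since (iii) only requires (\ref{fK}) for a sub-family of the collections allowed in (ii). So the real work is in (ii)$\,\Rightarrow\,$(i) and (iii)$\,\Rightarrow\,$(ii).

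For (ii)$\,\Rightarrow\,$(i) I would first reduce an arbitrary collection $\mathcal{K}^*$ to a countable one. The set $\mathbf{O}:=\bigcup_{K\in\mathcal{K}^*}\mathcal{M}(K)$ is $\tau_{uF}$-open, hence $\tau_F$-open; as $(\mathcal{F},\tau_F)$ is second countable and therefore hereditarily Lindel\"{o}f, this cover of $\mathbf{O}$ admits a countable subcover, so $\mathbf{O}=\bigcup_n\mathcal{M}(K_n)$ with $(K_n)\subseteq\mathcal{K}^*$. Complementing gives $\bigcap_{K\in\mathcal{K}^*}\mathcal{H}(K)=\bigcap_n\mathcal{H}(K_n)=:A$, and the $\tau_{uF}$-closed sets $A_m:=\bigcap_{i=1}^m\mathcal{H}(K_i)$ (which therefore lie in $\mathcal{B}_{uF}$) decrease to $A$. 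From (ii), $\limsup_\alpha P_\alpha(A)\le\limsup_\alpha P_\alpha(A_m)\le P(A_m)$ for every $m$; letting $m\to\infty$ and invoking continuity of $P$ from above yields (\ref{K}), and Proposition \ref{fc} gives (i). I would stress that the reduction to a \emph{countable} subcollection is genuinely needed: continuity from above fails for general downward-directed (uncountable) families, so one cannot pass to the infimum over all finite subcollections of $\mathcal{K}^*$ directly.

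The remaining and hardest step is (iii)$\,\Rightarrow\,$(ii). Fix non-empty compact $K_1,\dots,K_m$ and a compatible metric on $E$ (available since $E$ is locally compact, second countable, Hausdorff). For $r>0$ write $K_i^{r-}\subseteq K_i^{r}$ for the open and closed $r$-neighbourhoods from the appendix; $K_i^{r}$ is compact by Lemma \ref{Kr}. Since $K_i^{r-}$ is open, $\mathcal{H}(K_i^{r-})$ is $\tau_F$-open and sits inside the $\tau_F$-closed set $\mathcal{H}(K_i^{r})$, so $\partial_F\mathcal{H}(K_i^{r})\subseteq\mathcal{H}(K_i^{r})\setminus\mathcal{H}(K_i^{r-})$. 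The key observation is that for $0<s<r$ one has $K_i^{s}\subseteq K_i^{r-}$, hence the non-decreasing, bounded function $r\mapsto P(\mathcal{H}(K_i^{r}))$ satisfies $\lim_{s\uparrow r}P(\mathcal{H}(K_i^{s}))\le P(\mathcal{H}(K_i^{r-}))\le P(\mathcal{H}(K_i^{r}))$; so at each of its (all but countably many) left-continuity points $r$ one gets $P(\partial_F\mathcal{H}(K_i^{r}))=0$. Picking a single sequence $r_j\downarrow 0$ avoiding the countably many exceptional values for all $i\le m$ at once, the sets $\widetilde{K}_i^{(j)}:=K_i^{r_j}$ are non-empty compact with $P(\partial_F\mathcal{H}(\widetilde{K}_i^{(j)}))=0$, so (iii) bounds $\limsup_\alpha P_\alpha(\bigcap_{i=1}^m\mathcal{H}(\widetilde{K}_i^{(j)}))$ by $P(\bigcap_{i=1}^m\mathcal{H}(\widetilde{K}_i^{(j)}))$. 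Finally I let $j\to\infty$: since $K_i\subseteq\widetilde{K}_i^{(j)}$ the left-hand side dominates $\limsup_\alpha P_\alpha(\bigcap_i\mathcal{H}(K_i))$, while the intersections $\bigcap_i\mathcal{H}(\widetilde{K}_i^{(j)})$ decrease to $\bigcap_i\mathcal{H}(K_i)$ — one uses $\bigcap_j\mathcal{H}(\widetilde{K}_i^{(j)})=\mathcal{H}(K_i)$, valid because a closed set meeting every $K_i^{r_j}$ meets $K_i$ by compactness of $K_i^{r_1}$ — so by continuity from above the right-hand side tends to $P(\bigcap_i\mathcal{H}(K_i))$. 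This is (\ref{fK}).

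I expect the main obstacle to be precisely this last implication, i.e. pinning down, for each compact $K$, arbitrarily small $r$ with $P(\partial_F\mathcal{H}(K^{r}))=0$; the device above — trapping $\partial_F\mathcal{H}(K^{r})$ between $\mathcal{H}(K^{r-})$ and $\mathcal{H}(K^{r})$ and using that a bounded monotone function has only countably many discontinuities — is what makes it go through. A secondary point that needs care is the Lindel\"{o}f reduction in (ii)$\,\Rightarrow\,$(i), without which continuity from above would be misapplied.
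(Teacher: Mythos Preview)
Your proof is correct and shares the paper's overall architecture --- the same chain of implications, the same countable reduction for (ii)$\Rightarrow$(i), and the same blow-up $K_i\rightsquigarrow K_i^{r_j}$ with $r_j\downarrow 0$ for (iii)$\Rightarrow$(ii) --- but two steps are executed differently. For (ii)$\Rightarrow$(i), the paper argues by contradiction: it writes a putative bad $\tau_{uF}$-closed set as $\bigcap_{i\in\mathbb{N}}\mathcal{H}(K_i)$ using a countable base of $\tau_{uF}$ and derives a numerical contradiction via $\sigma$-continuity from above; your direct Lindel\"{o}f argument reaches the same countable reduction without the contradiction scaffolding. For (iii)$\Rightarrow$(ii), the substantive difference is in locating the radii $r$ with $P(\partial_F\mathcal{H}(K^r))=0$. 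The paper does this geometrically: Lemma~\ref{boundary} (resting on Lemma~\ref{intF}) identifies $\partial_F\mathcal{H}(K^r)$, and Lemma~\ref{dense} then shows these boundaries are pairwise disjoint for distinct $r$, so only countably many can carry positive mass. Your device --- bounding $P(\partial_F\mathcal{H}(K^r))$ by the left-jump at $r$ of the bounded monotone function $r\mapsto P(\mathcal{H}(K^r))$, via the soft inclusion $\mathcal{H}(K^{r-})\subseteq Int_F(\mathcal{H}(K^r))$ --- is more elementary and bypasses Lemmas~\ref{intF}--\ref{dense} entirely. In exchange, the paper's route yields the sharper structural fact that the $\partial_F\mathcal{H}(K^r)$ are disjoint, which is of independent interest.
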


\begin{proof} The implication $(i) \Rightarrow (ii)$ follows from Proposition \ref{fc} with $\mathcal{K}^*=\{K_1,\ldots,K_m\}$.
For the reverse direction $(ii) \Rightarrow (i)$ one has to show (\ref{closed}), i.e.
\begin{equation} \label{uFc}
 \limsup_\alpha P_\alpha(\textbf{F}) \le P(\textbf{F}) \quad \text{for all } \tau_{uF}-\text{closed sets } \textbf{F}.
\end{equation}
We prove this by contradiction. So, assume that there exists a $\tau_{uF}$-closed set $\textbf{F}$ such that
\begin{equation} \label{contra}
 \limsup_\alpha P_\alpha(\textbf{F}) > P(\textbf{F}).
\end{equation}
The complement $\textbf{F}^c$ of $\textbf{F}$ in $\mathcal{F}$ lies in $\tau_{uF}$ and $\tau_{uF}$ has a countable base $\{\mathcal{M}(K): K \in \mathcal{K}_0 \}$, where $\mathcal{K}_0$ is a certain countable family of non-empty compact sets, confer, e.g., Ferger \cite{Ferger0} or Gersch \cite{Gersch}. Thus one can find a sequence $(K_i)_{i \in \mathbb{N}}$
of non-empty compact sets such that $\textbf{F}^c = \bigcup_{i \in \mathbb{N}} \mathcal{M}(K_i)$, whence $\textbf{F} = \bigcap_{i \in \mathbb{N}} \mathcal{H}(K_i)$. For every $k \in \mathbb{N}$ put $\textbf{F}_k := \bigcap_{i=1}^k \mathcal{H}(K_i)$. Then $\textbf{F}_k \downarrow \textbf{F}, k \rightarrow \infty$ and therefore
\begin{equation} \label{Fk}
 P(\textbf{F}_k) \downarrow P(\textbf{F}), k \rightarrow \infty.
\end{equation}
Let $a:= \frac{1}{2}(\limsup_\alpha P_\alpha(\textbf{F}) - P(\textbf{F})$. By assumption (\ref{contra}) the real number $a$ is positive and
by definition satisfies
\begin{equation} \label{contra2}
 \limsup_\alpha P_\alpha(\textbf{F}) = P(\textbf{F})+2 a > P(\textbf{F})+a.
\end{equation}
Recall that $(A, \le)$ is the directed set, where $\alpha$ is at home. By (\ref{Fk}) and $a>0$ there exists a $k \in \mathbb{N}$ such that
\begin{equation} \label{Fk2}
 P(\textbf{F}_k)-P(\textbf{F})< a/2.
\end{equation}
By definition $\limsup_\alpha P_\alpha(\textbf{F})= \inf_{\beta \in A} \sup_{\alpha \ge \beta} P_\alpha(\textbf{F})$ and thus (\ref{contra2}) ensures that
\begin{equation} \label{contra3}
 \sup_{\alpha \ge \beta} P_\alpha(\textbf{F}) > P(\textbf{F})+a \quad \forall \; \beta \in A.
\end{equation}
It follows that
\begin{equation}
\sup_{\alpha \ge \beta} P_\alpha(\textbf{F}_k)\ge \sup_{\alpha \ge \beta} P_\alpha(\textbf{F})>P(\textbf{F})+a/2+a/2>P(\textbf{F}_k)+a/2 \quad \forall \; \beta \in A.
\end{equation}
Here, the first inequality holds, because $\textbf{F}_k \supset \textbf{F}$, the second equality is (\ref{contra3}) and the last equality follows from (\ref{Fk2}). Taking the infimum over all $\beta \in A$ we obtain
\begin{eqnarray*}
 P(\textbf{F}_k)+a/2 &\le& \inf_{\beta \in A} \sup_{\alpha \ge \beta} P_\alpha(\textbf{F}_k)=\limsup_\alpha P_\alpha(\textbf{F}_k)=\limsup_\alpha P_\alpha(\bigcap_{i=1}^k \mathcal{H}(K_i))\\
 &\le& P(\bigcap_{i=1}^k \mathcal{H}(K_i))=P(\textbf{F}_k) \quad \text{ by assumption (ii)}.
\end{eqnarray*}
Consequently, $P(\textbf{F}_k)+a/2 \le P(\textbf{F}_k)$ and thus $a \le 0$ in contradiction to $a>0$.
Since the implication $(ii) \Rightarrow (iii)$ is trivial, it remains to prove that (iii) implies (ii).

For this purpose consider $R_i=R(K_i):=\{r>0: P(\partial \mathcal{H}(K_i^r))=0\}, 1 \le i \le m$. Here, $K_i^r$ is the closed $r$-neighborhood of $K_i$, which by Lemma \ref{Kr} in the appendix is compact.
We know from Lemma \ref{dense} in the appendix
that for each index $i$ the complement $R_i^c$ of $R_i$ is denumerable, whence $\bigcup_{i=1}^m R_i^c$ is denumerable as well.
As a consequence $R(K_1,\ldots,K_m):= \bigcap_{i=1}^m R(K_i)=(\bigcup_{i=1}^m R_i^c)^c$ lies dense in $[0,\infty)$.
Thus there exists a sequence $(r_j)_{j \in \mathbb{N}}$ in $R(K_1,\ldots,K_m)$ such that $r_j \downarrow 0, j \rightarrow \infty$.
Conclude that
\begin{equation} \label{ass3}
 \limsup_\alpha P_\alpha(\bigcap_{i=1}^m \mathcal{H}(K_i)) \le \limsup_\alpha P_\alpha(\bigcap_{i=1}^m \mathcal{H}(K_i^{r_j})) \le P(\bigcap_{i=1}^m \mathcal{H}(K_i^{r_j})) \quad \forall \; j \in \mathbb{N}.
\end{equation}
Here, the first inequality holds, because $K_i \subseteq K_i^{r_j}$ and therefore $\mathcal{H}(K_i) \subseteq \mathcal{H}(K_i^{r_j})$.
As to the second inequality observe that $r_j \in R(K_i)$ for every $1 \le i \le m$, which means that $P(\partial \mathcal{H}(K_i^{r_j})=0$
for all $i$ and so we can use assumption (iii) taking into account that the $K_i^{r_j}$ are all compact. Finally, consider $\textbf{E}_j := \bigcap_{i=1}^m \mathcal{H}(K_i^{r_j}), j \in \mathbb{N}$,
which by monotonicity of $(r_j)$ are monotone decreasing. So $\sigma$-continuity of $P$ from above yields:
\begin{equation} \label{Cij1}
\lim_{j \rightarrow \infty} P(\bigcap_{i=1}^m \mathcal{H}(K_i^{r_j}))= \lim_{j \rightarrow \infty} P(\textbf{E}_j)=P(\bigcap_{j \in \mathbb{N}} \textbf{E}_j)=P(\bigcap_{i=1}^m \bigcap_{j \in \mathbb{N}}\mathcal{H}(K_i^{r_j})).
\end{equation}
Put $C_{ij}:=K_i^{r_j}$. Observe that for each fixed $1 \le i \le m$ we have that
$C_{ij} \downarrow K_i, j \rightarrow \infty$ and that $(C_{ij})_{j \in \mathbb{N}} \subseteq \mathcal{K}$. Thus $\bigcap_{j \in \mathbb{N}}\mathcal{H}(C_{ij}) = \mathcal{H}(K_i)$ for every $1 \le i \le m$ by Lemma \ref{Heq} in the appendix. Infer that
\begin{equation} \label{Cij2}
 P(\bigcap_{i=1}^m \bigcap_{j \in \mathbb{N}}\mathcal{H}(K_i^{r_j}))=P(\bigcap_{i=1}^m \mathcal{H}(K_i)).
\end{equation}
Thus taking the limit $j \rightarrow \infty$ in (\ref{ass3}) yields (ii) by (\ref{Cij1}) and (\ref{Cij2}).
\end{proof}

Next we relate $P_\alpha \rightarrow_w P$ on $(\mathcal{F}, \tau_{uF})$ with $P_\alpha \rightarrow_w P$ on $(\mathcal{F}, \tau_{F})$.  A first simple relation is given in:\\

\begin{proposition} \label{p1} If $P_\alpha \rightarrow_w P$ on $(\mathcal{F}, \tau_{F})$, then $P_\alpha \rightarrow_w P$ on $(\mathcal{F}, \tau_{uF})$.
\end{proposition}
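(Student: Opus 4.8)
The plan is to exploit the elementary fact that the upper Fell topology is coarser than the Fell topology, so that weak convergence with respect to the finer topology automatically descends to the coarser one. Concretely, I would first record that $\mathcal{S}_{uF} \subseteq \mathcal{S}_{F}$ directly from the definitions of these subbases, hence the generated topologies satisfy $\tau_{uF} \subseteq \tau_{F}$; equivalently, every $\tau_{uF}$-open set is $\tau_{F}$-open and every $\tau_{uF}$-closed set is $\tau_{F}$-closed. Because of the identity $\mathcal{B}_F = \mathcal{B}_{uF}$ recorded in (\ref{Borel}), the two hyperspaces carry one and the same Borel $\sigma$-algebra, so $P_\alpha$ and $P$ are legitimate probability measures on $(\mathcal{F},\mathcal{B}_{uF})$ and all sets appearing below are measurable; this is the only bookkeeping point that needs to be noted before the argument can even be phrased.

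Next I would invoke the characterisation of weak convergence spelled out in the introduction. The hypothesis $P_\alpha \rightarrow_w P$ on $(\mathcal{F},\tau_{F})$ means, by the equivalence $(\ref{wc}) \Leftrightarrow (\ref{open})$ applied with $(X,\mathcal{O}) = (\mathcal{F},\tau_{F})$, that $\liminf_\alpha P_\alpha(\textbf{O}) \ge P(\textbf{O})$ for every $\textbf{O} \in \tau_{F}$. Since $\tau_{uF} \subseteq \tau_{F}$, this inequality holds in particular for every $\textbf{O} \in \tau_{uF}$. But that is precisely condition (\ref{open}) for the space $(\mathcal{F},\tau_{uF})$, which by the same equivalence chain $(\ref{wc}) \Leftrightarrow (\ref{open})$ — now read in $(\mathcal{F},\tau_{uF})$ — is equivalent to $P_\alpha \rightarrow_w P$ on $(\mathcal{F},\tau_{uF})$. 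Equivalently, one may run the argument through (\ref{closed}): the $\tau_F$-version yields $\limsup_\alpha P_\alpha(\textbf{F}) \le P(\textbf{F})$ for all $\tau_F$-closed $\textbf{F}$, hence a fortiori for all $\tau_{uF}$-closed $\textbf{F}$, which is the criterion for weak convergence on $(\mathcal{F},\tau_{uF})$.

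I do not expect any real obstacle: the statement is a soft consequence of comparing two topologies that share a Borel structure, and no property of the carrier space $E$ is needed beyond what has already been used to guarantee (\ref{Borel}). Structurally, the whole argument is an instance of the general principle that, for a fixed measurable space, the weak topology $\tau_{weak}$ is monotone in the underlying topology $\mathcal{O}$: enlarging $\mathcal{O}$ enlarges the family of evaluation maps $e_O$ and thus refines $\tau_{weak}$, so net convergence persists upon passing to any coarser topology with the same Borel $\sigma$-algebra. If desired, I would state the proof in this short topological form; the only subtlety worth flagging explicitly remains the coincidence of domains furnished by (\ref{Borel}).
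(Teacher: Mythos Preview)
Your proposal is correct and follows exactly the paper's own argument: use the inclusion $\tau_{uF}\subseteq\tau_F$ together with the characterization (\ref{open}) of weak convergence. The paper's proof is the one-line version of what you wrote; your additional remarks about (\ref{Borel}) and the monotonicity of $\tau_{weak}$ in the underlying topology are sound but not needed beyond what the paper already records.
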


\begin{proof} This follows from the equivalent characterization (\ref{open}) taking into account that $\tau_F \supseteq \tau_{uF}$.
\end{proof}

Weak convergence on $(\mathcal{F}, \tau_{F})$ is well-studied in contrast to that on $(\mathcal{F}, \tau_{uF})$. For example
$P_\alpha \rightarrow_w P$ on $(\mathcal{F}, \tau_{F})$ if and only if $P_\alpha(\mathcal{H}(K)) \rightarrow P(\mathcal{H}(K))$ for all
compact $K$ with $P(\partial_F \mathcal{H}(K))=0$. This and other characterisations can be found in in Molchanov \cite{Molchanov}.
Here, only sequences of probability measures are considered. However, in Ferger \cite{Ferger0} we carry over the theory to nets of probability measures.

Our next result is really astonishing and has interesting consequences. Here we say that $Q$ \emph{dominates} $P$ (in symbol: $P \le Q$) if
$P(\bigcap_{i=1}^m \mathcal{H}(K_i))\le Q(\bigcap_{i=1}^m \mathcal{H}(K_i))$ for every $m \in \mathbb{N}$ and for every collection $K_1,\ldots,K_m$ of non-empty compact sets.\\

\begin{lemma} \label{allconverge} Every net $(P_\alpha)$ is weakly convergent on $(\mathcal{F}, \tau_{uF})$ with limit $\delta_E$, the Dirac-measure at point $E$.
Moreover, if $P_\alpha \rightarrow_w P$ on $(\mathcal{F}, \tau_{uF})$, then $P_\alpha \rightarrow_w Q$ on $(\mathcal{F}, \tau_{uF})$ for each $Q$ that dominates $P$.
\end{lemma}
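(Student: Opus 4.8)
The plan is to derive both assertions directly from the equivalence (i)$\Leftrightarrow$(ii) in Theorem~\ref{thm1}, which reduces weak convergence on $(\mathcal{F},\tau_{uF})$ to the family of inequalities~(\ref{fK}) indexed by finite collections of non-empty compact sets.

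For the first assertion I would first evaluate the right-hand side of~(\ref{fK}) for the candidate limit $\delta_E$. Given any $m\in\mathbb{N}$ and non-empty compact sets $K_1,\ldots,K_m$, each intersection $E\cap K_i=K_i$ is non-empty, so $E\in\bigcap_{i=1}^m\mathcal{H}(K_i)$ and hence $\delta_E(\bigcap_{i=1}^m\mathcal{H}(K_i))=1$. Consequently
$$
\limsup_\alpha P_\alpha\Bigl(\bigcap_{i=1}^m\mathcal{H}(K_i)\Bigr)\le 1=\delta_E\Bigl(\bigcap_{i=1}^m\mathcal{H}(K_i)\Bigr)
$$
holds trivially for every such collection, so statement~(ii) of Theorem~\ref{thm1} is met with $P=\delta_E$, and the implication (ii)$\Rightarrow$(i) gives $P_\alpha\rightarrow_w\delta_E$ on $(\mathcal{F},\tau_{uF})$.

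For the second assertion, suppose $P_\alpha\rightarrow_w P$ on $(\mathcal{F},\tau_{uF})$ and let $Q$ dominate $P$. The implication (i)$\Rightarrow$(ii) of Theorem~\ref{thm1} yields $\limsup_\alpha P_\alpha(\bigcap_{i=1}^m\mathcal{H}(K_i))\le P(\bigcap_{i=1}^m\mathcal{H}(K_i))$ for every finite collection of non-empty compact sets $K_1,\ldots,K_m$. By the very definition of domination, $P(\bigcap_{i=1}^m\mathcal{H}(K_i))\le Q(\bigcap_{i=1}^m\mathcal{H}(K_i))$ for all such collections, so concatenating the two inequalities gives $\limsup_\alpha P_\alpha(\bigcap_{i=1}^m\mathcal{H}(K_i))\le Q(\bigcap_{i=1}^m\mathcal{H}(K_i))$. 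This is precisely statement~(ii) of Theorem~\ref{thm1} with $P$ replaced by $Q$, and (ii)$\Rightarrow$(i) delivers $P_\alpha\rightarrow_w Q$ on $(\mathcal{F},\tau_{uF})$.

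There is no genuinely hard step: once Theorem~\ref{thm1} is in hand, both claims are bookkeeping. The two points deserving a moment's attention are that the hypothesis ``$K_i$ non-empty'' is exactly what forces $\delta_E$ to charge $\bigcap_{i=1}^m\mathcal{H}(K_i)$ with full mass (without it $\delta_E$ could not serve as a universal limit), and that the notion of domination has been defined so as to match the right-hand sides occurring in~(\ref{fK}). One could in fact view the first assertion as a special case of the second, since $\delta_E$ dominates every probability measure on $(\mathcal{F},\tau_{uF})$; but proving the first assertion on its own is cleaner and also confirms that at least one weak limit always exists.
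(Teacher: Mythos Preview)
Your proof is correct and follows essentially the same approach as the paper: both invoke the equivalence (i)$\Leftrightarrow$(ii) of Theorem~\ref{thm1}, observe that $\delta_E(\bigcap_{i=1}^m\mathcal{H}(K_i))=1$ for non-empty compact $K_i$, and then use the definition of domination to pass from $P$ to $Q$. Your write-up is simply more explicit about why $E\in\bigcap_{i=1}^m\mathcal{H}(K_i)$ and about chaining the inequalities, but the underlying argument is identical.
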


\begin{proof} If $K_1,\ldots,K_m$ are non-empty compact sets, then $\delta_E(\bigcap_{i=1}^m \mathcal{H}(K_i))=1$, whence (ii) of Theorem \ref{thm1} is fulfilled
and therefore $P_\alpha \rightarrow_w \delta_E$ on $(\mathcal{F},\tau_{uF})$.
Each $Q$ that dominates $P$ satisfies (ii) of Theorem \ref{thm1}, which shows the second claim by another application of Theorem \ref{thm1}.
\end{proof}

As immediate consequences we obtain:\\

\begin{corollary} \label{properties} The topological space $(\Pi(\mathcal{F}, \tau_{uF}), \tau_{weak})$ is compact.
In general it is not Hausdorff and therefore not metrizable.
\end{corollary}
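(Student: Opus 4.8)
The plan is to deduce both assertions directly from Lemma \ref{allconverge}. For compactness, recall that a topological space is compact if and only if every net has a convergent subnet; in fact we have the much stronger statement from Lemma \ref{allconverge} that in $(\Pi(\mathcal{F},\tau_{uF}),\tau_{weak})$ every net itself converges (to $\delta_E$, among possibly other limits). So I would argue: let $(P_\alpha)$ be an arbitrary net in $\Pi(\mathcal{F},\tau_{uF})$. By the first part of Lemma \ref{allconverge}, $P_\alpha \to_w \delta_E$ on $(\mathcal{F},\tau_{uF})$, i.e. $P_\alpha \to \delta_E$ in $(\Pi,\tau_{weak})$. A fortiori $(P_\alpha)$ has a convergent subnet (itself), and since the net was arbitrary, the space is compact. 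Alternatively, one may phrase it via open covers, but the net formulation is cleanest given what we already have.

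For the failure of the Hausdorff property I would exhibit two distinct probability measures that are both limits of one and the same net. The natural choice: take $P := \delta_\emptyset$, the Dirac measure at the closed set $\emptyset \in \mathcal{F}$, and $Q := \delta_E$. These are distinct elements of $\Pi$ because $\emptyset \neq E$ (here one uses $E \neq \emptyset$, which is part of the standing assumptions, so that $\mathcal{F}$ genuinely has at least two points). For any non-empty compact sets $K_1,\dots,K_m$ we have $\emptyset \cap K_i = \emptyset$, so $\emptyset \notin \mathcal{H}(K_i)$ and hence $P(\bigcap_{i=1}^m \mathcal{H}(K_i)) = \delta_\emptyset(\bigcap_{i=1}^m \mathcal{H}(K_i)) = 0 \le Q(\bigcap_{i=1}^m \mathcal{H}(K_i))$; thus $Q$ dominates $P$ in the sense defined before Lemma \ref{allconverge}. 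Now consider the constant net $P_\alpha := P$ for all $\alpha$; trivially $P_\alpha \to_w P$ on $(\mathcal{F},\tau_{uF})$, so by the second part of Lemma \ref{allconverge} also $P_\alpha \to_w Q$ on $(\mathcal{F},\tau_{uF})$. Hence the net $(P_\alpha)$ has two distinct limits $P \neq Q$ in $(\Pi,\tau_{weak})$, which is impossible in a Hausdorff space. Therefore $(\Pi(\mathcal{F},\tau_{uF}),\tau_{weak})$ is not Hausdorff, and since every metrizable space is Hausdorff, it is not metrizable either.

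The only subtlety worth spelling out — and what I expect to be the sole minor obstacle — is making sure the two witness measures are genuinely distinct and genuinely in the domain: this is where the standing hypothesis $E \neq \emptyset$ enters, and one should note that $\delta_\emptyset$ and $\delta_E$ are well defined on $\mathcal{B}_{uF}$ since $\{\emptyset\}$ and $\{E\}$ are Borel sets in $(\mathcal{F},\tau_F)=(\mathcal{F},\tau_{uF})$-Borel sense (indeed $\{E\}=\bigcap_{K\in\mathcal{K}}\mathcal{H}(K)$ is $\tau_{uF}$-closed, and $\{\emptyset\}$ is $\tau_F$-closed, hence both lie in $\mathcal{B}_F=\mathcal{B}_{uF}$). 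Everything else is immediate from the definitions of compactness and the Hausdorff axiom in terms of nets, together with Lemma \ref{allconverge}; no estimates are required.
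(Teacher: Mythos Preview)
Your proposal is correct and is precisely the argument the paper has in mind: the corollary is stated there as an ``immediate consequence'' of Lemma \ref{allconverge} with no further proof, and you have simply spelled out those immediate consequences (every net converges, hence compactness; a single net with two distinct limits, hence non-Hausdorff). One small simplification: you do not need the domination clause for the non-Hausdorff part, since the \emph{first} assertion of Lemma \ref{allconverge} already says the constant net $P_\alpha=\delta_\emptyset$ converges to $\delta_E$, while it trivially converges to $\delta_\emptyset$; and to see $\delta_\emptyset\neq\delta_E$ it suffices to evaluate both at $\mathcal{H}(K)$ for any non-empty compact $K$, so the discussion of whether $\{\emptyset\}$ and $\{E\}$ are Borel is unnecessary.
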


\vspace{0.4cm}
\begin{corollary} In general the reverse conclusion in Proposition \ref{p1} is not true.
\end{corollary}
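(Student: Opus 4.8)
The plan is to exhibit an explicit counterexample rather than to argue abstractly. Take the constant net $P_\alpha := \delta_\emptyset$, the Dirac measure concentrated at the closed set $\emptyset \in \mathcal{F}$; this is a legitimate element of $\Pi(\mathcal{F},\tau_{uF})=\Pi(\mathcal{F},\tau_F)$ since the two Borel $\sigma$-algebras coincide by (\ref{Borel}). By Lemma \ref{allconverge} this net converges weakly to $\delta_E$ on $(\mathcal{F},\tau_{uF})$. It therefore suffices to show that it does \emph{not} converge weakly to $\delta_E$ on $(\mathcal{F},\tau_F)$, which then refutes the converse of Proposition \ref{p1}.

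For the non-convergence I would use the closed-set characterisation (\ref{closed}) applied to the space $(X,\mathcal{O})=(\mathcal{F},\tau_F)$: it is enough to produce a single $\tau_F$-closed set $\textbf{F}$ with $\limsup_\alpha P_\alpha(\textbf{F}) > \delta_E(\textbf{F})$. The natural candidate is $\textbf{F}:=\{\emptyset\}$. One checks that $\{\emptyset\}=\mathcal{M}(E)$, because $F\cap E=F$ for every $F\in\mathcal{F}$, and that $\mathcal{M}(E)=\mathcal{F}\setminus\mathcal{H}(E)$ is $\tau_F$-closed since $E\in\mathcal{G}$ forces $\mathcal{H}(E)\in\mathcal{S}_F\subseteq\tau_F$ (equivalently, singletons are closed in the Hausdorff space $(\mathcal{F},\tau_F)$). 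Then $P_\alpha(\textbf{F})=\delta_\emptyset(\{\emptyset\})=1$ for all $\alpha$, whereas $\delta_E(\textbf{F})=\delta_E(\{\emptyset\})=0$ as soon as $E\neq\emptyset$; this contradicts (\ref{closed}) and finishes the argument.

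There is no genuine obstacle here — the point is just to notice that the very phenomenon recorded in Lemma \ref{allconverge} (every net converging to $\delta_E$ in the non-Hausdorff upper Fell topology) cannot persist in the Hausdorff Fell topology. The only things to be careful about are: the degenerate case $E=\emptyset$, in which $\mathcal{F}=\{\emptyset\}$ is a single point and the statement is vacuous, should be set aside; and one should make explicit that $\{\emptyset\}$ is $\tau_F$-closed. An alternative, slightly less self-contained route would invoke that $(\mathcal{F},\tau_F)$ is compact metrizable, so that $(\Pi(\mathcal{F},\tau_F),\tau_{weak})$ is Hausdorff with unique weak limits, whence the constant net $\delta_\emptyset$ — which trivially converges to $\delta_\emptyset\neq\delta_E$ there — cannot also converge to $\delta_E$; I prefer the concrete set $\{\emptyset\}$ because it avoids appealing to metrizability of the weak topology.
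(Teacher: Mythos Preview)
Your argument is correct and follows the same route the paper intends: the corollary is stated there as an ``immediate consequence'' of Lemma~\ref{allconverge}, and you have simply made that consequence explicit by choosing the constant net $P_\alpha=\delta_\emptyset$, invoking Lemma~\ref{allconverge} for the $\tau_{uF}$-convergence to $\delta_E$, and then exhibiting the $\tau_F$-closed set $\{\emptyset\}=\mathcal{M}(E)$ to violate (\ref{closed}). Nothing is missing; your handling of the degenerate case $E=\emptyset$ and the verification that $\{\emptyset\}$ is $\tau_F$-closed are appropriate pieces of care.
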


\vspace{0.4cm}
In view of the last result, the question arises under which additional conditions the reversal applies.
The answer involves the family $\textbf{F}_{0,1}$ of all sets with at most one element,
i.e., $\textbf{F}_{0,1}=\{\emptyset\} \cup \{\{x\}: x \in E\}$. Since the empty set and all singletons are closed,
$\textbf{F}_{0,1} \subseteq \mathcal{F}$. According to Lemma \ref{F01} in the appendix $\textbf{F}_{0,1}$ is $\tau_F$-closed
and thus in particular is a Borel-set: $\textbf{F}_{0,1} \in \mathcal{B}_F$. \\

\begin{theorem} \label{thm2} Suppose that
\begin{itemize}
\item[(i)] $P_\alpha \rightarrow_w P$ on $(\mathcal{F}, \tau_{uF})$,
\item[(ii)] For each $\epsilon>0$ there exists a $K \in \mathcal{K}$ such that
\begin{equation} \label{ii}
 \liminf_\alpha P_\alpha(\{F \in \mathcal{F}: \emptyset \neq F \subseteq K\}) \ge 1-\epsilon,
\end{equation}
(This is a bit more than asymptotic compact-boundedness (\ref{ab}).)
\item[(iii)] $P(\textbf{F}_{0,1}) = 1$.
\end{itemize}
Then
\begin{equation} \label{result}
 P_\alpha \rightarrow_w P \text{ on } (\mathcal{F}, \tau_{F}).
\end{equation}
\end{theorem}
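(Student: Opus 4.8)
The plan is to deduce (\ref{result}) from the characterisation of weak convergence on $(\mathcal{F},\tau_F)$ recalled above: $P_\alpha\to_w P$ on $(\mathcal{F},\tau_F)$ if and only if $P_\alpha(\mathcal{H}(K))\to P(\mathcal{H}(K))$ for every compact $K$ with $P(\partial_F\mathcal{H}(K))=0$ (Molchanov \cite{Molchanov}; for nets, Ferger \cite{Ferger0}). One half of this, namely $\limsup_\alpha P_\alpha(\mathcal{H}(K))\le P(\mathcal{H}(K))$ for \emph{every} non-empty compact $K$, is already at our disposal: it is statement (ii) of Theorem \ref{thm1} with $m=1$, hence a consequence of hypothesis (i). Thus the whole task reduces to the matching lower bound $\liminf_\alpha P_\alpha(\mathcal{H}(K))\ge P(\mathcal{H}(K))$ at those compacts $K$ for which $P(\partial_F\mathcal{H}(K))=0$.

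First I would use (iii) to bring $P$ into a workable form. Let $i\colon E\to\mathcal{F}$, $i(x)=\{x\}$; this map is continuous, hence Borel measurable (as $\mathcal{B}_F=\mathcal{B}_{uF}$), and injective. For $\epsilon>0$ pick $K\in\mathcal{K}$ as in (\ref{ii}); then $\{F\in\mathcal{F}\colon\emptyset\neq F\subseteq K\}\subseteq\mathcal{H}(K)$, and since $\mathcal{H}(K)=\mathcal{M}(K)^c$ is $\tau_{uF}$-closed, (i) together with (ii) forces $P(\mathcal{H}(K))\ge\limsup_\alpha P_\alpha(\mathcal{H}(K))\ge 1-\epsilon$. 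Because $\mathcal{H}(K)\cap\textbf{F}_{0,1}=i(K)$, hypothesis (iii) gives $P(i(E))\ge P(i(K))=P(\mathcal{H}(K))\ge1-\epsilon$, whence $P(i(E))=1$ and $P(\{\emptyset\})=0$. Therefore $P=i_*\mu$, where $\mu(B):=P(i(B))$ defines a Borel probability measure on $E$ which, by the estimate $\mu(K)=P(\mathcal{H}(K))\ge1-\epsilon$ just obtained, is tight. In particular $P(\mathcal{H}(A))=\mu(A)$ for every Borel $A\subseteq E$, and, using that $\mathcal{H}(K)$ is $\tau_F$-closed with interior $\mathrm{int}_F\mathcal{H}(K)=\mathcal{H}(\mathrm{int}\,K)$ (which is readily checked), one has $\partial_F\mathcal{H}(K)=\mathcal{H}(K)\setminus\mathcal{H}(\mathrm{int}\,K)$ and hence $P(\partial_F\mathcal{H}(K))=\mu(\partial K)$. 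So it remains to prove: for every compact $K$ with $\mu(\partial K)=0$,
\[
\liminf_\alpha P_\alpha(\mathcal{H}(K))\ \ge\ \mu(K).
\]

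This inequality is the heart of the matter and the step I expect to be the main obstacle. The difficulty is that hypothesis (i) is intrinsically one-sided --- through Theorem \ref{thm1} it only bounds hitting probabilities $P_\alpha(\mathcal{H}(\cdot))$ from above --- so the required lower bound must be manufactured from the tightness hypothesis (ii); this is precisely the role of the strengthening ``$\emptyset\neq F$'' in (\ref{ii}) and of the concentration (iii). Assume $\mu(K)>0$ (otherwise the claim is trivial), so $\mu(\mathrm{int}\,K)=\mu(K)$; fix $\epsilon,\eta>0$. By (ii) choose $L\in\mathcal{K}$ with $\liminf_\alpha P_\alpha(\{F\colon\emptyset\neq F\subseteq L\})\ge1-\epsilon$; as $\{F\colon\emptyset\neq F\subseteq L\}\subseteq\mathcal{H}(L)$ this gives $\liminf_\alpha P_\alpha(\mathcal{H}(L))\ge1-\epsilon$. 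Using inner regularity of the tight measure $\mu$ and local compactness of $E$, choose a compact $C$ with $C\subseteq\mathrm{int}\,K$ and $\mu(\mathrm{int}\,C)>\mu(K)-\eta$ (take a compact inside $\mathrm{int}\,K$ of $\mu$-measure exceeding $\mu(K)-\eta$, then a compact having it in its interior and itself contained in $\mathrm{int}\,K$). The set inclusion $\mathcal{H}(L)\subseteq\mathcal{H}(C)\cup\mathcal{H}(\overline{L\setminus C})$ holds --- if $F$ meets $L$ but misses $C$ then $\emptyset\neq F\cap L\subseteq L\setminus C$ --- and $\overline{L\setminus C}$, as a closed subset of the compact set $L$, is compact; hence $P_\alpha(\mathcal{H}(C))\ge P_\alpha(\mathcal{H}(L))-P_\alpha(\mathcal{H}(\overline{L\setminus C}))$. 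Since, by Theorem \ref{thm1} with $m=1$, $\limsup_\alpha P_\alpha(\mathcal{H}(\overline{L\setminus C}))\le\mu(\overline{L\setminus C})\le\mu\bigl((\mathrm{int}\,C)^c\bigr)=1-\mu(\mathrm{int}\,C)$, and $\mathcal{H}(C)\subseteq\mathcal{H}(K)$, we obtain
\[
\liminf_\alpha P_\alpha(\mathcal{H}(K))\ \ge\ (1-\epsilon)-\bigl(1-\mu(\mathrm{int}\,C)\bigr)\ >\ \mu(K)-\eta-\epsilon .
\]
Letting $\eta\downarrow0$ and $\epsilon\downarrow0$ yields $\liminf_\alpha P_\alpha(\mathcal{H}(K))\ge\mu(K)$. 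Together with the upper bound from the first paragraph this proves $P_\alpha(\mathcal{H}(K))\to P(\mathcal{H}(K))$ for every compact $K$ with $P(\partial_F\mathcal{H}(K))=0$, which by the cited characterisation is exactly (\ref{result}).
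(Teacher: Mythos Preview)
Your proof is correct, and it follows a genuinely different route from the paper's.

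The paper proves the Portmanteau criterion directly: for an arbitrary $\tau_F$-closed $\textbf{F}$ it intersects with the tightness set $\textbf{B}=\{F:\emptyset\neq F\subseteq K\}$ from (ii), sets $\textbf{C}=\textbf{F}\cap\textbf{B}$, and establishes the topological key relation $cl_{uF}(\textbf{C})\subseteq cl_F(\textbf{C})\cup\textbf{F}_{0,1}^c$. This relation is proved via a lemma (Lemma~\ref{reverse}) showing that $\tau_{uF}$-convergence of nets lying in $\textbf{B}$ towards a singleton or $\emptyset$ automatically upgrades to $\tau_F$-convergence; condition (iii) then kills the $\textbf{F}_{0,1}^c$ term. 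You instead invoke the hitting-set characterisation of $\tau_F$-weak convergence, use (i)--(iii) to show $P=i_*\mu$ for a tight probability $\mu$ on $E$ (so that $P(\mathcal{H}(A))=\mu(A)$ and $P(\partial_F\mathcal{H}(K))=\mu(\partial K)$), and manufacture the missing lower bound $\liminf_\alpha P_\alpha(\mathcal{H}(K))\ge\mu(K)$ by the splitting $\mathcal{H}(L)\subseteq\mathcal{H}(C)\cup\mathcal{H}(\overline{L\setminus C})$ with $C\subseteq\mathrm{int}\,K$ chosen by inner regularity. The paper's argument is self-contained and isolates the hyperspace-topological reason why concentration on singletons bridges $\tau_{uF}$ and $\tau_F$; your argument is more measure-theoretic and, once the characterisation is granted, arguably more elementary---the role of (iii) becomes transparent (it identifies $P$ with a measure on $E$), and the strengthening ``$\emptyset\neq F$'' in (ii) is used exactly where one expects, to pass from $\{F:\emptyset\neq F\subseteq L\}$ to $\mathcal{H}(L)$. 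One small point you might make explicit: that $i(B)\in\mathcal{B}_F$ for Borel $B\subseteq E$ (so $\mu$ is well-defined) follows because $i$ is a homeomorphism onto the Borel set $i(E)=\textbf{F}_{0,1}\setminus\{\emptyset\}$ in the Polish space $(\mathcal{F},\tau_F)$.
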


\begin{proof} Let $\textbf{F}$ be $\tau_F$-closed and for each $\epsilon>0$ let $K \in \mathcal{K}$ as in (ii). Put
$\textbf{B}:=\{F \in \mathcal{F}: \emptyset \neq F \subseteq K\}$. Since $\textbf{B}= \mathcal{H}(E) \cap \mathcal{M}(K^c)$ Lemma 2.1.1 in
Schneider and Weil \cite{Weil} ensures that $\textbf{B} \in \mathcal{B}_F$ is a Borel-set and hence the probabilities in
(\ref{ii}) are well-defined. The decomposition $\textbf{F}=(\textbf{F} \cap \textbf{B}) \cup (\textbf{F} \cap \textbf{B}^c)$ yields that
$\textbf{F} \subseteq \textbf{C} \cup \textbf{B}^c$ with $\textbf{C}=\textbf{F} \cap \textbf{B} \in \mathcal{B}_F$.
According to (ii) $\liminf_\alpha P_\alpha (\textbf{B}) \ge 1-\epsilon$, whence by complementation $\limsup_\alpha P_\alpha (\textbf{B}^c) \le \epsilon$. It follows that
\begin{equation} \label{i1}
\limsup_\alpha P_\alpha(\textbf{F}) \le \limsup_\alpha P_\alpha (\textbf{C}) + \epsilon \quad \text{ for every } \epsilon>0.
\end{equation}
In the sequel $cl_{uF}$ and $cl_F$ refer to the closure with respect to the upper Fell-topology and the Fell-topology, respectively.
The next relation is the key of our proof.

\begin{equation} \label{key}
 \textbf{C} \subseteq cl_{uF}(\textbf{C}) \subseteq cl_F(\textbf{C}) \cup \textbf{F}_{0,1}^c.
\end{equation}
The first $\subseteq$ holds by definition of the closure. As to the second one let $F \in cl_{uF}(\textbf{C})$. Then there exists a net
$(F_\alpha)$ in $\textbf{C}$ with $F_\alpha \rightarrow F$ in $(\mathcal{F},\tau_{uF})$. If $F \notin \textbf{F}_{0,1}$, then it lies in the
set on the right side of (\ref{key}) as desired. So it remains to consider $F \in \textbf{F}_{0,1}$. If $F=\emptyset$, then
$F_\alpha \rightarrow F$ in $(\mathcal{F},\tau_{F})$ by Lemma \ref{reverse} (b) in the appendix and consequently $F \in cl_F(\textbf{C})$.
If $F=\{x\}$ is a singleton, then we use that $\emptyset \neq F_\alpha \subseteq K$ for all $\alpha \in A$, because $F_\alpha \in \textbf{C} \subseteq \textbf{B}$. Thus we can apply Lemma \ref{reverse} (a) in the appendix, which yields that $F_\alpha \rightarrow F$ in $(\mathcal{F},\tau_{F})$, whence $F \in cl_F(\textbf{C})$ also in that last case. This finally shows that (\ref{key}) is true.

Next, observe that
\begin{equation} \label{i2}
\limsup_\alpha P_\alpha(\textbf{C}) \le \limsup_\alpha P_\alpha(cl_F(\textbf{C})) \le P(cl_{uF}(\textbf{C}),
\end{equation}
by the first part of (\ref{key}) and by assumption (i), because $cl_{uF}(\textbf{C})$ is $\tau_{uF}$-closed. The second part of (\ref{key}) gives
\begin{equation} \label{i3}
 P(cl_{uF}(\textbf{C}))\le P(cl_{F}(\textbf{C}))+P(\textbf{F}_{0,1}^c)=P(cl_{F}(\textbf{C}))\le P(cl_{F}(\textbf{F}))=P(\textbf{F}).
\end{equation}
Here, the first inequality holds by the second part of (\ref{key}), the equality holds by assumption (iii), the second inequality holds by $cl_F(\textbf{C}) \subseteq cl_F(\textbf{F})$, because $\textbf{C} \subseteq \textbf{F}$ and the last equality holds, since $\textbf{F}$ is $\tau_F$-closed. Combining (\ref{i1}), (\ref{i2}) and (\ref{i3}) results in
$$
\limsup_\alpha P_\alpha(\textbf{F}) \le P(\textbf{F})+ \epsilon \quad \forall \; \epsilon>0.
$$
Taking the limit $\epsilon \rightarrow 0$ yields the convergence in (\ref{result}).
\end{proof}

\section{Distributional convergence of random closed sets in $(\mathcal{F},\tau_{uF})$}
Let $(\Omega,\mathcal{A},\mathbb{P})$ be a probability space. A map $C:\Omega \rightarrow \mathcal{F}$ is called \emph{random closed set (in $E$ on} $(\Omega,\mathcal{A},\mathbb{P})$), if it is
$\mathcal{A}-\mathcal{B}_F$ measurable. Its distribution $\mathbb{P}\circ C^{-1}$ is a probability measure on $(\mathcal{F},\tau_F)$, but also on $(\mathcal{F},\tau_{uF})$ by (\ref{Borel}). Conversely, by the canonical construction every probability measure $P$ on $(\mathcal{F},\tau_F)$ or $(\mathcal{F},\tau_{uF})$, respectively, is the
distribution of a random closed set on a probability space $(\Omega,\mathcal{A},\mathbb{P})$. Indeed, one can take $(\Omega,\mathcal{A},\mathbb{P})=
(\mathcal{F},\mathcal{B}_F,P)=(\mathcal{F},\mathcal{B}_{uF},P)$ and $C$ is equal to the identity map.

If $(C_\alpha)_{\alpha \in A}$ is a net of random closed sets $C_\alpha$ in $E$ on $(\Omega_\alpha,\mathcal{A}_\alpha,\mathbb{P}_\alpha)$, then as usual we define distributional convergence by weak convergence of the distributions.
More precisely, $C_\alpha \stackrel{\mathcal{D}}{\rightarrow} C$ in $(\mathcal{F},\tau_{uF})$, if $\mathbb{P}_\alpha \circ C_\alpha^{-1} \rightarrow_w \mathbb{P}\circ C^{-1}$ on $(\mathcal{F},\tau_{uF})$ and $C_\alpha \stackrel{\mathcal{D}}{\rightarrow} C$ in $(\mathcal{F},\tau_{F})$, if $\mathbb{P}_\alpha \circ C_\alpha^{-1} \rightarrow_w \mathbb{P}\circ C^{-1}$ on $(\mathcal{F},\tau_{F})$.

The short discussion above shows that every result in the last section (except for Corollary \ref{properties}) can be formulated in terms of random closed sets. For example Theorem \ref{thm1} takes the following form:\\

\begin{theorem} \label{thm1rcs} The following statements are equivalent:
\begin{itemize}
\item[(i)] $C_\alpha \stackrel{\mathcal{D}}{\rightarrow} C$ in $(\mathcal{F},\tau_{uF})$.
\item[(ii)]
\begin{equation} \label{fKrcs}
\limsup_\alpha \mathbb{P}_\alpha(C_\alpha \cap K_1\neq \emptyset,\ldots, C_\alpha \cap K_m \neq \emptyset) \le \mathbb{P}(C \cap K_1\neq \emptyset,\ldots, C \cap K_m \neq \emptyset)
\end{equation}
for every $m \in \mathbb{N}$ and every finite collection $K_1,\ldots,K_m$ of non-empty compact sets in $E$.
\item[(iii)] The inequality (\ref{fKrcs}) holds for every $m \in \mathbb{N}$ and every finite collection $K_1,\ldots,K_m$ of non-empty compact sets in $E$
such that $\mathbb{P}(C \in \partial_F \mathcal{H}(K_i))=0$ for all $1 \le i \le m$.
\end{itemize}
\end{theorem}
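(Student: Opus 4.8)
The plan is to observe that Theorem \ref{thm1rcs} is nothing but Theorem \ref{thm1} rewritten through the dictionary between random closed sets and their distributions, so the proof amounts to a careful transcription. First I would introduce the abbreviations $P_\alpha := \mathbb{P}_\alpha \circ C_\alpha^{-1}$ and $P := \mathbb{P}\circ C^{-1}$, which by definition of $\mathcal{A}_\alpha$-$\mathcal{B}_F$ measurability of $C_\alpha$ and $C$ are probability measures on $(\mathcal{F},\mathcal{B}_F)=(\mathcal{F},\mathcal{B}_{uF})$; by the very definition of distributional convergence, statement (i) is literally the assertion $P_\alpha \rightarrow_w P$ on $(\mathcal{F},\tau_{uF})$.

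Next I would record the elementary image-measure identities. For non-empty compact sets $K_1,\dots,K_m$ and each $\alpha$,
\begin{equation*}
 P_\alpha\Bigl(\bigcap_{i=1}^m \mathcal{H}(K_i)\Bigr) = \mathbb{P}_\alpha\bigl(C_\alpha \in \mathcal{H}(K_1)\cap\cdots\cap\mathcal{H}(K_m)\bigr) = \mathbb{P}_\alpha(C_\alpha\cap K_1\neq\emptyset,\dots,C_\alpha\cap K_m\neq\emptyset),
\end{equation*}
using only that $\mathcal{H}(K_i)=\{F\in\mathcal{F}: F\cap K_i\neq\emptyset\}$ and that $\bigcap_{i=1}^m \mathcal{H}(K_i)\in\mathcal{B}_F$ (already noted in the excerpt, e.g. via Schneider and Weil \cite{Weil}). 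The same identity with $P$ in place of $P_\alpha$ and $C$ in place of $C_\alpha$ holds. Hence the inequality (\ref{fKrcs}) is, term by term, exactly the inequality (\ref{fK}) for $P_\alpha$ and $P$. Likewise, $\mathbb{P}(C\in\partial_F\mathcal{H}(K_i)) = P(\partial_F\mathcal{H}(K_i))$, so the extra hypothesis in (iii) here matches the extra hypothesis in (iii) of Theorem \ref{thm1}.

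With these identifications in hand, the three statements (i), (ii), (iii) of Theorem \ref{thm1rcs} are respectively equivalent to (i), (ii), (iii) of Theorem \ref{thm1} applied to the net $(P_\alpha)$ and the limit $P$; therefore their mutual equivalence follows at once from Theorem \ref{thm1}. I do not anticipate a real obstacle: the only points requiring a word of care are the measurability of the sets $\bigcap_{i=1}^m\mathcal{H}(K_i)$ and $\partial_F\mathcal{H}(K_i)$ in $\mathcal{F}$ — so that the pushed-forward probabilities are well defined — and the fact that $\mathcal{B}_F=\mathcal{B}_{uF}$, both of which are already available in the excerpt. So the proof is short and consists essentially in making the translation explicit and then invoking Theorem \ref{thm1}.
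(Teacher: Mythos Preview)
Your proposal is correct and matches the paper's approach exactly: the paper does not give a separate proof of Theorem \ref{thm1rcs} but simply presents it as the reformulation of Theorem \ref{thm1} in terms of random closed sets via the identification $P_\alpha=\mathbb{P}_\alpha\circ C_\alpha^{-1}$, $P=\mathbb{P}\circ C^{-1}$. Your explicit transcription of the image-measure identities and the boundary condition is precisely the ``short discussion'' the paper alludes to before stating the theorem.
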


Notice that by Lemma \ref{boundary} (i) $\mathbb{P}(C \in \partial_F \mathcal{H}(K_i))=\mathbb{P}(C \cap K_i \neq \emptyset, C \cap K_i^0 = \emptyset)$. Thus we see that Theorem \ref{thm1rcs} is a generalization of Vogel's \cite{Vogel} Lemma 2.1, where only sequences rather than nets of random closed sets are considered and furthermore $E$ is required to be the euclidian space $\mathbb{R}^d$.

Assume $C$ and $D$ are random closed sets with $\mathbb{P}^*(C \nsubseteq D)=0$, where $\mathbb{P}^*$ is the outer measure of $\mathbb{P}$.
This means that $\{C \nsubseteq D\}$ is a $\mathbb{P}$-null set and by completion of the probability space $(\Omega,\mathcal{A},\mathbb{P})$ we can achieve that $\{C \nsubseteq D\} \in \mathcal{A}$. Now we can say that $C \subseteq D \; \mathbb{P}$-almost surely (a.s.). In this case
$\mathbb{P} \circ D^{-1}$ dominates $\mathbb{P} \circ C^{-1}$ and from Lemma \ref{allconverge} we can deduce:\\

\begin{lemma} \label{sup}
$C_\alpha \stackrel{\mathcal{D}}{\rightarrow} E$ in $(\mathcal{F},\tau_{uF})$ for all nets $(C_\alpha)$ of random closed sets in $E$.
Moreover, if $C_\alpha \stackrel{\mathcal{D}}{\rightarrow} C$ in $(\mathcal{F},\tau_{uF})$ and $D$ is a random closed set with
$C \subseteq D \; \mathbb{P}$-a.s., then $C_\alpha \stackrel{\mathcal{D}}{\rightarrow} D$ in $(\mathcal{F},\tau_{uF})$.
\end{lemma}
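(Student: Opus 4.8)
The plan is to reduce the whole statement to Lemma~\ref{allconverge}, exactly as announced in the paragraph preceding the lemma. For the first assertion, note that the constant map $\omega \mapsto E$ is trivially a random closed set, with distribution $\delta_E$; hence $C_\alpha \stackrel{\mathcal{D}}{\rightarrow} E$ in $(\mathcal{F},\tau_{uF})$ is by definition the statement $\mathbb{P}_\alpha \circ C_\alpha^{-1} \rightarrow_w \delta_E$ on $(\mathcal{F},\tau_{uF})$, which is precisely the first half of Lemma~\ref{allconverge} applied to the net $P_\alpha := \mathbb{P}_\alpha \circ C_\alpha^{-1}$.

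For the second assertion, set $P := \mathbb{P} \circ C^{-1}$ and $Q := \mathbb{P} \circ D^{-1}$, both probability measures on $\mathcal{B}_F = \mathcal{B}_{uF}$. The key step is to verify that $Q$ dominates $P$ in the sense defined before Lemma~\ref{allconverge}, i.e.\ $P(\bigcap_{i=1}^m \mathcal{H}(K_i)) \le Q(\bigcap_{i=1}^m \mathcal{H}(K_i))$ for every $m \in \mathbb{N}$ and all non-empty compact $K_1,\ldots,K_m$. Here I would use the identity $P(\bigcap_{i=1}^m \mathcal{H}(K_i)) = \mathbb{P}(C \cap K_1 \neq \emptyset,\ldots, C \cap K_m \neq \emptyset)$, and its analogue for $Q$, which are legitimate because each $\mathcal{H}(K_i) = \mathcal{M}(K_i)^c$ is $\tau_F$-closed, so the finite intersection lies in $\mathcal{B}_F$. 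On the $\mathbb{P}$-almost sure event $\{C \subseteq D\}$, which by the completion argument in the text belongs to $\mathcal{A}$ and has full probability, $C \cap K_i \neq \emptyset$ forces $D \cap K_i \supseteq C \cap K_i \neq \emptyset$; consequently $\{C \cap K_1 \neq \emptyset,\ldots, C \cap K_m \neq \emptyset\} \subseteq \{D \cap K_1 \neq \emptyset,\ldots, D \cap K_m \neq \emptyset\} \cup \{C \nsubseteq D\}$. Taking $\mathbb{P}$ and using $\mathbb{P}(C \nsubseteq D)=0$ yields the desired inequality $P(\bigcap_{i=1}^m \mathcal{H}(K_i)) \le Q(\bigcap_{i=1}^m \mathcal{H}(K_i))$.

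Once domination is established the conclusion is immediate: the hypothesis $C_\alpha \stackrel{\mathcal{D}}{\rightarrow} C$ in $(\mathcal{F},\tau_{uF})$ says $P_\alpha \rightarrow_w P$ on $(\mathcal{F},\tau_{uF})$, so the second half of Lemma~\ref{allconverge} gives $P_\alpha \rightarrow_w Q$ on $(\mathcal{F},\tau_{uF})$, which is exactly $C_\alpha \stackrel{\mathcal{D}}{\rightarrow} D$ in $(\mathcal{F},\tau_{uF})$.

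I do not expect a genuine obstacle here; the only points requiring a little care are purely measure-theoretic bookkeeping — making sure that the events $\{C \cap K_1 \neq \emptyset,\ldots, C \cap K_m \neq \emptyset\}$ and $\{C \nsubseteq D\}$ are measurable (the former because $\bigcap_{i=1}^m \mathcal{H}(K_i) \in \mathcal{B}_F$, the latter via the completion of $(\Omega,\mathcal{A},\mathbb{P})$ invoked in the text) and correctly absorbing the null set $\{C \nsubseteq D\}$ in the inclusion of events. Everything substantive has already been done in Theorem~\ref{thm1} and Lemma~\ref{allconverge}.
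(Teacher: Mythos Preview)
Your proposal is correct and follows exactly the route the paper intends: the paper does not give a separate proof of Lemma~\ref{sup} but derives it, as you do, directly from Lemma~\ref{allconverge} after observing in the preceding paragraph that $C \subseteq D$ $\mathbb{P}$-a.s.\ implies $\mathbb{P}\circ D^{-1}$ dominates $\mathbb{P}\circ C^{-1}$. Your write-up is in fact more explicit than the paper about why domination holds and about the measurability bookkeeping.
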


\vspace{0.3cm}
In short, every superset of a limit set is also a limit set. Conversely,
every net $(D_\alpha)$ of subsets, i.e., $D_\alpha \subseteq C_\alpha \; \mathbb{P}_\alpha$-a.s., also converges to $C$.
In fact, a somewhat more general result applies:\\

\begin{lemma} \label{sub} Let $(C_\alpha)$ and $(D_\alpha)$ be nets of random closed sets in $E$ on $(\Omega_\alpha,\mathcal{A}_\alpha,\mathbb{P}_\alpha)$ such that
\begin{equation} \label{subsets}
 \limsup_\alpha \mathbb{P}_\alpha(D_\alpha \nsubseteq C_\alpha)=0.
\end{equation}
Then $C_\alpha \stackrel{\mathcal{D}}{\rightarrow} C$ in $(\mathcal{F},\tau_{uF})$ entails $D_\alpha \stackrel{\mathcal{D}}{\rightarrow} C$ in $(\mathcal{F},\tau_{uF})$.
\end{lemma}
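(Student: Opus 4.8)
The plan is to verify condition (ii) of Theorem \ref{thm1rcs} for the net $(D_\alpha)$ with limit $C$, and then invoke the implication (ii) $\Rightarrow$ (i) of that theorem. Fix $m \in \mathbb{N}$ and non-empty compact sets $K_1,\ldots,K_m$ in $E$. Since each $K_i$ is closed, $\mathcal{H}(K_i) \in \mathcal{B}_F$, hence $\bigcap_{i=1}^m \mathcal{H}(K_i) \in \mathcal{B}_F$ by Lemma 2.1.1 in Schneider and Weil \cite{Weil}, so the events $\{C_\alpha \cap K_1 \neq \emptyset,\ldots,C_\alpha \cap K_m \neq \emptyset\}$ and $\{D_\alpha \cap K_1 \neq \emptyset,\ldots,D_\alpha \cap K_m \neq \emptyset\}$ are measurable and the probabilities below are well defined; the set $\{D_\alpha \nsubseteq C_\alpha\}$ is made measurable by passing to the completion of $(\Omega_\alpha,\mathcal{A}_\alpha,\mathbb{P}_\alpha)$, exactly as in the discussion preceding Lemma \ref{sup}.

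The key elementary observation is that on $\{D_\alpha \subseteq C_\alpha\}$ one has $D_\alpha \cap K_i \neq \emptyset \Rightarrow C_\alpha \cap K_i \neq \emptyset$ for every $i$, so after splitting along the partition $\{D_\alpha \subseteq C_\alpha\},\{D_\alpha \nsubseteq C_\alpha\}$ one gets
\begin{align*}
\{D_\alpha \cap K_1 \neq \emptyset,\ldots,D_\alpha \cap K_m \neq \emptyset\} \subseteq \{C_\alpha \cap K_1 \neq \emptyset,\ldots,C_\alpha \cap K_m \neq \emptyset\} \cup \{D_\alpha \nsubseteq C_\alpha\}.
\end{align*}
Applying $\mathbb{P}_\alpha$ and finite subadditivity yields
\begin{align*}
\mathbb{P}_\alpha(D_\alpha \cap K_1 \neq \emptyset,\ldots,D_\alpha \cap K_m \neq \emptyset) \le \mathbb{P}_\alpha(C_\alpha \cap K_1 \neq \emptyset,\ldots,C_\alpha \cap K_m \neq \emptyset) + \mathbb{P}_\alpha(D_\alpha \nsubseteq C_\alpha).
\end{align*}

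Next I would take $\limsup_\alpha$ on both sides and use the subadditivity $\limsup_\alpha(a_\alpha+b_\alpha) \le \limsup_\alpha a_\alpha + \limsup_\alpha b_\alpha$ together with the hypothesis (\ref{subsets}), which makes the last term vanish. Since $C_\alpha \stackrel{\mathcal{D}}{\rightarrow} C$ in $(\mathcal{F},\tau_{uF})$, the implication (i) $\Rightarrow$ (ii) of Theorem \ref{thm1rcs} bounds the remaining $\limsup$ by $\mathbb{P}(C \cap K_1 \neq \emptyset,\ldots,C \cap K_m \neq \emptyset)$, which is precisely inequality (\ref{fKrcs}) for $(D_\alpha)$ and limit $C$. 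An application of Theorem \ref{thm1rcs}, (ii) $\Rightarrow$ (i), then gives $D_\alpha \stackrel{\mathcal{D}}{\rightarrow} C$ in $(\mathcal{F},\tau_{uF})$. I do not expect any genuine obstacle: the argument is a one-line inclusion of events followed by the finite-intersection criterion of Theorem \ref{thm1rcs}. The only points needing a moment's care are the measurability of $\{D_\alpha \nsubseteq C_\alpha\}$ (handled by completing the probability spaces) and the fact that $\limsup$ is merely subadditive — but since $\limsup_\alpha \mathbb{P}_\alpha(D_\alpha \nsubseteq C_\alpha)=0$ by assumption, subadditivity suffices.
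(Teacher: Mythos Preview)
Your proposal is correct and follows essentially the same argument as the paper: split the event $\bigcap_{i=1}^m\{D_\alpha\cap K_i\neq\emptyset\}$ along the partition $\{D_\alpha\subseteq C_\alpha\},\{D_\alpha\nsubseteq C_\alpha\}$, obtain the inclusion into $\bigcap_{i=1}^m\{C_\alpha\cap K_i\neq\emptyset\}\cup\{D_\alpha\nsubseteq C_\alpha\}$, take $\limsup_\alpha$ and use subadditivity together with (\ref{subsets}) and Theorem~\ref{thm1rcs} in both directions. The only difference is that you add a remark on measurability of $\{D_\alpha\nsubseteq C_\alpha\}$ via completion, which the paper leaves implicit.
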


\begin{proof} Let $K_1,\ldots,K_m$ be non-empty compact sets in $E$. Then
\begin{eqnarray*}
 & &\bigcap_{i=1}^m \{D_\alpha \cap K_i \neq \emptyset\}\\
 &=&\bigcap_{i=1}^m (\{D_\alpha \cap K_i \neq \emptyset\}\cap \{D_\alpha \subseteq C_\alpha\}) \cup (\bigcap_{i=1}^m \{D_\alpha \cap K_i \neq \emptyset\}) \cap \{D_\alpha \nsubseteq C_\alpha\}\\
 &\subseteq& \bigcap_{i=1}^m \{C_\alpha \cap K_i \neq \emptyset\} \cup \{D_\alpha \nsubseteq C_\alpha\}.
\end{eqnarray*}
Consequently,
\begin{eqnarray*}
 & &\limsup_\alpha \mathbb{P}_\alpha(\bigcap_{i=1}^m \{D_\alpha \cap K_i \neq \emptyset\})\\
  &\le& \limsup_\alpha \mathbb{P}_\alpha(\bigcap_{i=1}^m \{C_\alpha \cap K_i \neq \emptyset\})+\limsup_\alpha \mathbb{P}_\alpha(D_\alpha \nsubseteq C_\alpha)\\
 &=&\limsup_\alpha \mathbb{P}_\alpha(\bigcap_{i=1}^m \{C_\alpha \cap K_i \neq \emptyset\}) \quad \text{ by } (\ref{subsets})\\
 &\le& \mathbb{P}(C \cap K_1\neq \emptyset,\ldots, C \cap K_m \neq \emptyset) \quad \text{  by Theorem \ref{thm1rcs}}.
\end{eqnarray*}
Another application of Theorem \ref{thm1rcs} yields the assertion.
\end{proof}

With our next result we can give an answer to the following question: If $C_\alpha \stackrel{\mathcal{D}}{\rightarrow} C$ in $(\mathcal{F},\tau_{uF})$, then what can we say about the asymptotic behaviour of random variables $\xi_\alpha \in C_\alpha$?\\

\begin{theorem} \label{thm3} For each $\alpha \in A$ let $\xi_\alpha:(\Omega_\alpha,\mathcal{A}_\alpha,\mathbb{P}_\alpha) \rightarrow (E,\mathcal{B}(E))$ be a measurable map (random variable in $E$). Suppose that:
\begin{itemize}
\item[(i)] $C_\alpha \stackrel{\mathcal{D}}{\rightarrow} C$ in $(\mathcal{F},\tau_{uF})$.
\item[(ii)] $\limsup_\alpha \mathbb{P}_\alpha(\xi_\alpha \notin C_\alpha)=0$.
\item[(iii)] For every $\epsilon>0$ there exists a $K \in \mathcal{K}$ such that
$$\liminf_\alpha \mathbb{P}_\alpha(\xi_\alpha \in K) \ge 1-\epsilon.$$
Then
\begin{equation} \label{T}
 \limsup_\alpha \mathbb{P}_\alpha(\xi_\alpha \in F) \le T_C(F) \quad \forall \; F \in \mathcal{F},
\end{equation}
where $T_C$ is a Choquet-capacity, namely the capacity functional of $C$ given by $T_C(B)=\mathbb{P}(C \cap B \neq \emptyset), B \in \mathcal{B}(E)$.

If in addition $C \subseteq \{\xi\}\; \mathbb{P}$-a.s. for some random variable $\xi$ in $E$ on $(\Omega,\mathcal{A},\mathbb{P})$, then
\begin{equation} \label{dcxi}
 \xi_\alpha \stackrel{\mathcal{D}}{\rightarrow} \xi \quad \text{ in } (E,\mathcal{G}).
\end{equation}
\end{itemize}
\end{theorem}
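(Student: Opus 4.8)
The plan is to realise the approximate selections $\xi_\alpha$ as singleton random closed sets, transfer the convergence from $(C_\alpha)$ to them by Lemma \ref{sub}, extract a hitting bound for compact test sets from Theorem \ref{thm1rcs}, and then upgrade that bound to arbitrary closed test sets using the asymptotic tightness assumption (iii).

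First I would set $D_\alpha:=\{\xi_\alpha\}$. Since $E$ is Hausdorff every singleton is closed, so $D_\alpha$ takes values in $\mathcal{F}$; moreover the singleton map $s:E\to\mathcal{F}$, $s(x)=\{x\}$, is $\mathcal{B}(E)$-$\mathcal{B}_F$ measurable, because $s^{-1}(\mathcal{M}(K))=K^c$ is open for every compact $K$ while $\mathcal{B}_{uF}=\mathcal{B}_F$ is generated by the countable base $\{\mathcal{M}(K):K\in\mathcal{K}_0\}$ used in the proof of Theorem \ref{thm1}. Hence $D_\alpha=s\circ\xi_\alpha$ is a random closed set. Since $D_\alpha\subseteq C_\alpha$ holds exactly when $\xi_\alpha\in C_\alpha$, hypothesis (ii) reads $\limsup_\alpha\mathbb{P}_\alpha(D_\alpha\nsubseteq C_\alpha)=0$, so Lemma \ref{sub} together with (i) gives $D_\alpha\stackrel{\mathcal{D}}{\rightarrow}C$ in $(\mathcal{F},\tau_{uF})$. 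Applying Theorem \ref{thm1rcs}(ii) with $m=1$ to a single non-empty compact set $K$ and using $\{D_\alpha\cap K\neq\emptyset\}=\{\xi_\alpha\in K\}$, I obtain
$$
\limsup_\alpha\mathbb{P}_\alpha(\xi_\alpha\in K)\ \le\ \mathbb{P}(C\cap K\neq\emptyset)\ =\ T_C(K)\quad\text{for every non-empty compact }K .
$$
All probabilities here are well defined, since $\mathcal{H}(B)\in\mathcal{B}_F$ for every $B\in\mathcal{B}(E)$ by (\ref{HBinF}); in particular $T_C$ is the capacity functional of $C$, a Choquet capacity in the sense of Ferger \cite{Ferger1}.

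Next I would pass from compact to closed test sets. Fix $F\in\mathcal{F}$ and $\epsilon>0$, and choose $K\in\mathcal{K}$ as in (iii), so that $\limsup_\alpha\mathbb{P}_\alpha(\xi_\alpha\notin K)\le\epsilon$ by complementation. From $\{\xi_\alpha\in F\}\subseteq\{\xi_\alpha\in F\cap K\}\cup\{\xi_\alpha\notin K\}$ and subadditivity of $\limsup$,
$$
\limsup_\alpha\mathbb{P}_\alpha(\xi_\alpha\in F)\ \le\ \limsup_\alpha\mathbb{P}_\alpha(\xi_\alpha\in F\cap K)+\epsilon .
$$
Now $F\cap K$ is compact; if $F\cap K\neq\emptyset$, the preceding bound and the monotonicity $T_C(F\cap K)\le T_C(F)$ (which holds because $C\cap(F\cap K)\subseteq C\cap F$) bound the first summand by $T_C(F)$, and if $F\cap K=\emptyset$ it is $0\le T_C(F)$. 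Letting $\epsilon\downarrow0$ yields (\ref{T}). For the additional claim, assume $C\subseteq\{\xi\}$ $\mathbb{P}$-a.s.; then for any closed $F$, if $C(\omega)\cap F\neq\emptyset$ and $C(\omega)\subseteq\{\xi(\omega)\}$ then $C(\omega)=\{\xi(\omega)\}$ and $\xi(\omega)\in F$, so $\{C\cap F\neq\emptyset\}\setminus\{\xi\in F\}\subseteq\{C\nsubseteq\{\xi\}\}$ is a $\mathbb{P}$-null event, giving $T_C(F)=\mathbb{P}(C\cap F\neq\emptyset)\le\mathbb{P}(\xi\in F)$. Combining with (\ref{T}), $\limsup_\alpha\mathbb{P}_\alpha(\xi_\alpha\in F)\le\mathbb{P}(\xi\in F)$ for every closed $F$ in $(E,\mathcal{G})$, which by the characterisation (\ref{closed}) of weak convergence is precisely $\xi_\alpha\stackrel{\mathcal{D}}{\rightarrow}\xi$ in $(E,\mathcal{G})$.

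The main obstacle is modest. The two non-routine points are the $\mathcal{B}_F$-measurability of $\omega\mapsto\{\xi_\alpha(\omega)\}$, needed for Lemma \ref{sub} to apply (alternatively one can avoid singletons altogether by using $\{\xi_\alpha\in K\}\subseteq\{C_\alpha\cap K\neq\emptyset\}\cup\{\xi_\alpha\notin C_\alpha\}$ and applying Theorem \ref{thm1rcs} to $C_\alpha$ directly), and the compact-to-closed passage, where tightness (iii) is indispensable because (\ref{T}) quantifies over all closed sets whereas Theorem \ref{thm1rcs} only controls compact ones.
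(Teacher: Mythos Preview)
Your proof is correct and follows essentially the same route as the paper: form the singleton random closed sets $\{\xi_\alpha\}$, transfer the $\tau_{uF}$-convergence from $(C_\alpha)$ to them via Lemma~\ref{sub}, and then use the tightness assumption (iii) to upgrade the hitting bound from compact to closed test sets. The only cosmetic difference is that the paper packages the compact-to-closed passage by citing Corollary~\ref{hitF} (with $\mathcal{F}^*=\{F\}$, noting that asymptotic compact-boundedness of $(\{\xi_\alpha\})$ is exactly (iii)), whereas you carry out that step by hand.
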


\begin{proof} By Lemma 2.1.1 in Schneider and Weil \cite{Weil} $\mathcal{B}_F=\sigma(\{\mathcal{H}(G):G\in \mathcal{G}\})$. Thus $D_\alpha :=\{\xi_\alpha\}$ are random closed sets, because $\{D_\alpha \in \mathcal{H}(G)\}=\{\xi_\alpha \in G\} \in \mathcal{A}_\alpha$.
From (i) and (ii) it follows by Lemma \ref{sub} that $\{\xi_\alpha\} \stackrel{\mathcal{D}}{\rightarrow} C$ in $(\mathcal{F},\tau_{uF}).$
Conclude with (iii) and Corollary \ref{hitF} (in the formulation for random closed sets and with $\mathcal{F}^*=\{F\}$ a singleton) that
\begin{equation} \label{TT}
 \limsup_\alpha \mathbb{P}_\alpha(\xi_\alpha \in F) = \limsup_\alpha \mathbb{P}_\alpha(\{\xi_\alpha\} \cap F \neq \emptyset) \le \mathbb{P}(C\cap F \neq \emptyset)=T_C(F) \quad \forall \; F \in \mathcal{F}.
\end{equation}
This shows (\ref{T}). Under the additional assumption $C \subseteq \{\xi\}$ a.s. it follows that $T_C(F) \le \mathbb{P}(\xi \in F)$, whence we can infer from
(\ref{TT}) that $\limsup_\alpha \mathbb{P}_\alpha(\xi_\alpha \in F)\le \mathbb{P}(\xi \in F)$ for all closed $F$, which by (\ref{closed}) yields
the distributional convergence (\ref{dcxi}).
\end{proof}

If a net $(\xi_\alpha)$ satisfies (\ref{T}), then we say that it \emph{converges in distribution to (the random closed set)} $C$ and denote this by $$\xi_\alpha \stackrel{\mathcal{D}}{\rightarrow} C.$$ This new type of distributional convergence has been introduced and analyzed in Ferger \cite{Ferger1}.
Here, for instance we show in Theorem 4.2 that $\xi_\alpha \stackrel{\mathcal{D}}{\rightarrow} C$ is equivalent to $\{\xi_\alpha\}\stackrel{\mathcal{D}}{\rightarrow} C$ in $(\mathcal{F}, \tau_{uV})$.\\

\begin{remark} Suppose that $C \subseteq \{\xi\}\; \mathbb{P}$-a.s. Under the assumptions (i)-(iii) of the last theorem it follows that actually
$C = \{\xi\}\; \mathbb{P}$-a.s. To see this notice that $E \in \mathcal{F}$ and $\mathbb{P}_\alpha(\xi_\alpha \in E)=1$ for all $\alpha \in A$.  Therefore (\ref{T}) with $F=E$  yields that $\mathbb{P}(C \cap E \neq \emptyset)=\mathbb{P}(C \neq \emptyset)=1$, whence $\mathbb{P}(C = \emptyset)=0$, which in turn by the assumption on $C$ gives the a.s. equality.
\end{remark}

\vspace{0.3cm}
\begin{remark} Since $T_C(B)=\mathbb{P}(C \in \mathcal{H}(B))$, the capacity functional $T_C$
is by (\ref{HBinF}) well-defined on the Borel-$\sigma$ algebra $\mathcal{B}(E)$ on $(E,\mathcal{G})$. In general, $T_C$
is not a probability measure. In fact, it is a probability measure if and only if there exists a random
variable $\{\xi\}$ such that $C \stackrel{\mathcal{D}}{=} \{\xi\}$, see Ferger \cite{Ferger0}.
For further properties of $T_C$ we refer to Molchanov \cite{Molchanov}.
\end{remark}

\vspace{0.3cm}
If $C_\alpha$ is a random closed set and $\xi_\alpha$ is a random variable in $E$ with $\xi_\alpha \in C_\alpha\; \mathbb{P}_\alpha$-a.s., then $\xi_\alpha$ is called a \emph{measurable selection of} $C_\alpha$.
By the \emph{Fundamental selection theorem}, confer Molchanov \cite{Molchanov} on p.77, the existence of $\xi_\alpha$ is guaranteed.
A net $(\xi_\alpha)$ satisfying condition (iii) of the above Theorem \ref{thm3} is called \emph{asymptotically tight}. This condition
is much weaker than the classical (uniform) tightness, which requires that $\mathbb{P}_\alpha(\xi_\alpha \notin K)\le \epsilon$
for all $\alpha \in A$ and not only in the limit.

The following corollary provides an answer to the question posed above.\\

\begin{corollary} \label{corthm3} Assume that $C_\alpha \stackrel{\mathcal{D}}{\rightarrow} C$ in $(\mathcal{F},\tau_{uF})$ and that  $(\xi_\alpha)$ is a net of measurable selections $\xi_\alpha$ of $C_\alpha$.
If $(\xi_\alpha)$ is asymptotically tight, then $\xi_\alpha \stackrel{\mathcal{D}}{\rightarrow} C$.
In case that $C \subseteq \{\xi\}$ a.s. for some random variable $\xi$ we obtain:
$\xi_\alpha \stackrel{\mathcal{D}}{\rightarrow} \xi$ in $(E,\mathcal{G})$.
\end{corollary}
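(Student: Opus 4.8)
The plan is to derive Corollary \ref{corthm3} as a direct specialization of Theorem \ref{thm3}. The key observation is that a net of measurable selections together with the asymptotic tightness hypothesis supplies exactly the three hypotheses (i)--(iii) of that theorem, so almost nothing new needs to be proved.

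First I would note that hypothesis (i) of Theorem \ref{thm3}, namely $C_\alpha \stackrel{\mathcal{D}}{\rightarrow} C$ in $(\mathcal{F},\tau_{uF})$, is literally the first assumption of the corollary. Next, since $\xi_\alpha$ is by definition a measurable selection of $C_\alpha$, we have $\xi_\alpha \in C_\alpha$ $\mathbb{P}_\alpha$-a.s., which gives $\mathbb{P}_\alpha(\xi_\alpha \notin C_\alpha)=0$ for every $\alpha$, hence certainly $\limsup_\alpha \mathbb{P}_\alpha(\xi_\alpha \notin C_\alpha)=0$; this is hypothesis (ii). Finally, asymptotic tightness of $(\xi_\alpha)$ is by definition precisely hypothesis (iii): for every $\epsilon>0$ there is a $K \in \mathcal{K}$ with $\liminf_\alpha \mathbb{P}_\alpha(\xi_\alpha \in K) \ge 1-\epsilon$. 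Thus all hypotheses of Theorem \ref{thm3} are met, and its conclusion (\ref{T}) gives $\limsup_\alpha \mathbb{P}_\alpha(\xi_\alpha \in F)\le T_C(F)$ for all $F\in\mathcal{F}$, which is exactly the statement $\xi_\alpha \stackrel{\mathcal{D}}{\rightarrow} C$.

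For the second assertion I would simply invoke the corresponding ``in addition'' clause of Theorem \ref{thm3}: if $C\subseteq\{\xi\}$ $\mathbb{P}$-a.s.\ for some random variable $\xi$ in $E$, then the theorem already yields $\xi_\alpha \stackrel{\mathcal{D}}{\rightarrow}\xi$ in $(E,\mathcal{G})$, which is the remaining claim.

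There is essentially no obstacle here; the only thing to be careful about is bookkeeping, i.e.\ checking that the ``a.s.'' in the definition of measurable selection translates (via completion of the probability space, as discussed in the passage preceding Lemma \ref{sub}) into the measurability needed for $\{\xi_\alpha \notin C_\alpha\}$ to be an event and for the probability in (ii) to be well-defined and equal to $0$. Once that is granted, the proof is a one-line citation of Theorem \ref{thm3}.

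\begin{proof}
Since $\xi_\alpha$ is a measurable selection of $C_\alpha$, we have $\xi_\alpha \in C_\alpha\;\mathbb{P}_\alpha$-a.s., so that $\mathbb{P}_\alpha(\xi_\alpha \notin C_\alpha)=0$ for every $\alpha\in A$ and hence $\limsup_\alpha \mathbb{P}_\alpha(\xi_\alpha \notin C_\alpha)=0$. Thus condition (ii) of Theorem \ref{thm3} holds. Condition (i) of Theorem \ref{thm3} is our assumption $C_\alpha \stackrel{\mathcal{D}}{\rightarrow} C$ in $(\mathcal{F},\tau_{uF})$, and condition (iii) of Theorem \ref{thm3} is precisely the asymptotic tightness of $(\xi_\alpha)$. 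Therefore Theorem \ref{thm3} applies and yields $\limsup_\alpha \mathbb{P}_\alpha(\xi_\alpha \in F)\le T_C(F)$ for all $F\in\mathcal{F}$, i.e., $\xi_\alpha \stackrel{\mathcal{D}}{\rightarrow} C$. If moreover $C\subseteq\{\xi\}\;\mathbb{P}$-a.s.\ for some random variable $\xi$ in $E$, then the additional part of Theorem \ref{thm3} gives $\xi_\alpha \stackrel{\mathcal{D}}{\rightarrow}\xi$ in $(E,\mathcal{G})$.
\end{proof}
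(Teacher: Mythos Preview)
Your proof is correct and follows exactly the same route as the paper: verify that conditions (i)--(iii) of Theorem \ref{thm3} are satisfied (with (ii) trivially holding because measurable selections give $\mathbb{P}_\alpha(\xi_\alpha\notin C_\alpha)=0$), and then invoke Theorem \ref{thm3} for both conclusions.
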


\begin{proof} Conditions (i) and (iii) of Theorem \ref{thm3} are fulfilled by assumption. Since each $\xi_\alpha$ is a measurable selection, we have that $\mathbb{P}(\xi_\alpha \notin C_\alpha)=0$ for all $\alpha \in A$, whence condition (ii) is trivially fulfilled and thus Theorem \ref{thm3} yields the assertion.
\end{proof}

\section{Appendix}
In this section we present several results, which we use in our proofs above. For some of these, the statements are known in case
$E$ is a finite-dimensional linear space with a metric $d$ such as for example $E=\mathbb{R}^d$. More details are given in our notes at the
end of the appendix.

Since $(E,\mathcal{G})$ is locally compact, second-countable and Hausdorff it is metrizable. By Theorem 2 of Vaughan \cite{Vaughan} the underlying metric $d$ can be chosen such that:

\begin{equation} \label{rc}
 \text{Every bounded set is relatively compact.}
\end{equation}

In addition $(E,d)$ is complete and thus a polish metric space. For the extremely useful result (\ref{rc}) confer also Engelking \cite{Engel}, Exercise 4.2C on p. 265.
Although Vaughan's theorem was published in 1937, it does not seem to be so well known.

Given a point $x \in E$ and a non-empty subset $A \subseteq E$ let $d(x,A):= \inf\{d(x,a): a \in A\}$ denote the distance of $x$ from $A$.
As usual $B(x,r):=\{y \in E: d(y,x) <r\}$ denotes the (open) ball with center $x$ and radius $r>0$. Moreover, $A^r:=\{x \in E: d(x,A)\le r\}$ and $A^{r-}:=\{x \in E: d(x,A)< r\}$
are respectively the \emph{closed} and \emph{open} $r$-\emph{neighborhoods} of A, where $r>0$.

We use the usual notation $Int(A)\equiv A^o, cl(A)\equiv \overline{A}$ and $\partial A$ for the interior, the closure and boundary, respectively, of $A$ in $(E,\mathcal{G})$.\\

\begin{lemma} \label{Kr} If $K \neq \emptyset$ is compact, then $K^r$ is compact for all $r > 0$.
\end{lemma}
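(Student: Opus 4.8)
The plan is to exploit the special metric $d$ provided by Vaughan's theorem, which by construction satisfies property (\ref{rc}), and to show that $K^r$ is a closed and bounded subset of $(E,d)$; then relative compactness gives the result. First I would note that $K^r$ is closed in $(E,\mathcal{G})$: the distance function $x\mapsto d(x,K)$ is $1$-Lipschitz, hence continuous, and $K^r=\{x\in E: d(x,K)\le r\}$ is the preimage of the closed interval $[0,r]$ under this map, so it is closed.

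Next I would verify boundedness. Since $K\neq\emptyset$ is compact, fix some $x_0\in K$; the continuous function $a\mapsto d(a,x_0)$ attains a finite maximum $M$ on $K$, and moreover for every $x\in E$ the infimum $d(x,K)$ is attained at some $a_x\in K$ by compactness of $K$. Hence for any $x\in K^r$ one gets $d(x,x_0)\le d(x,a_x)+d(a_x,x_0)\le r+M$, so that $K^r\subseteq\{x\in E: d(x,x_0)\le r+M\}$ is bounded. (Equivalently one may estimate $\mathrm{diam}(K^r)\le \mathrm{diam}(K)+2r$.)

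Finally, by (\ref{rc}) the bounded set $K^r$ is relatively compact, i.e. its closure $\overline{K^r}$ is compact; since $K^r$ is closed, $K^r=\overline{K^r}$, and therefore $K^r$ is compact, which is the assertion. The only delicate point — and the reason the choice of metric matters — lies in this last step: for an arbitrary metric compatible with $\mathcal{G}$ a closed bounded set need not be compact, so the argument genuinely depends on $d$ satisfying (\ref{rc}); everything else is routine.
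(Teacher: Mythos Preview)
Your proof is correct and follows essentially the same approach as the paper: show that $K^r$ is closed (as a preimage under the continuous distance function), show it is bounded, and then invoke property~(\ref{rc}). The only cosmetic difference is that you establish boundedness directly via the triangle inequality and a fixed center $x_0\in K$, whereas the paper proves the same fact by contradiction; your direct argument is in fact slightly cleaner.
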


\begin{proof} Firstly, observe that $K^r = d(\cdot,K)^{-1}((-\infty,r])$ is closed as the pre-image of the closed half-line $(-\infty,x]$ under the continuous function $x \mapsto d(x,K)$.
Therefore by (\ref{rc}) it suffices to show that $K^r$ is bounded, i.e., there exist some $x_0 \in E$ and some $s>0$ such that $K^r \subseteq B(x_0,s)$.
We prove this by contradiction. So, assume that $K^r \nsubseteq B(x_0,s)$ for each $x_0 \in E$ and for all $s>0$, that means there exists
some $y \in E$ with $d(y,K) \le r$, but $d(y,x_0) \ge s$. Since $K \in \mathcal{K}$, there exists some $z \in K$ such that $d(y,K)=d(y,z)$.
By the triangle-inequality we know that $d(x_0,y) \le d(x_0,z)+d(z,y)$, which implies that
$s-r \le d(x_0,y)- d(z,y) \le d(x_0,z)$. It follows that $s-r \le d(x_0,z)$ for all $x_0 \in E$ and for all $s>0$. Choosing $x_0=z$ leads to
$s \le r$ for all $s>0$, which is a contradiction. Now, by closedness $K^r = \overline{K^r}$ and $\overline{K^r} \in \mathcal{K}$ by (\ref{rc}),
whence $K^r$ is compact.
\end{proof}

For $\textbf{A} \subseteq \mathcal{F}$ let
$Int_F(\textbf{A})$ and $cl_F(\textbf{A})$ denote the interior and closure, respectively, of $\textbf{A}$ in $(\mathcal{F}, \tau_F)$.
In the following we will use that by construction of $\tau_F$ the basic open sets $\textbf{B}$ are all of the type
\begin{equation} \label{bos}
\textbf{B}= \mathcal{M}(K) \cap \mathcal{H}(G_1) \cap \ldots \cap \mathcal{H}(G_l)
\end{equation}
with $K \in \mathcal{K}, G_1, \ldots, G_l \in \mathcal{G}$ and
$l \in \mathbb{N}_0$. For $l=0$ we obtain $\textbf{B}=\mathcal{M}(K)$.\\

\begin{lemma} \label{intF} If $A \subseteq E$ is an arbitrary subset,
then
\begin{equation} \label{IntHA}
 Int_F(\mathcal{H}(A))=\mathcal{H}(Int(A)).
\end{equation}
\end{lemma}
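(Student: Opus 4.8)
The plan is to prove the two inclusions $Int_F(\mathcal{H}(A)) \subseteq \mathcal{H}(Int(A))$ and $\mathcal{H}(Int(A)) \subseteq Int_F(\mathcal{H}(A))$ separately, using in both directions the explicit description of the basic open sets of $\tau_F$ recorded in~(\ref{bos}).

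For the inclusion $\mathcal{H}(Int(A)) \subseteq Int_F(\mathcal{H}(A))$, let $F \in \mathcal{H}(Int(A))$, so there is a point $x \in F \cap Int(A)$. Since $x \in Int(A)$ there is an open set $G$ with $x \in G \subseteq A$. Then $\textbf{B} := \mathcal{H}(G)$ is a basic open set of the form~(\ref{bos}) with $K = \emptyset$ and $l = 1$, it contains $F$, and every $F' \in \mathcal{H}(G)$ meets $G \subseteq A$, hence $\mathcal{H}(G) \subseteq \mathcal{H}(A)$. Thus $F \in Int_F(\mathcal{H}(A))$.

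For the reverse inclusion, let $F \in Int_F(\mathcal{H}(A))$. Then there is a basic open set $\textbf{B} = \mathcal{M}(K) \cap \mathcal{H}(G_1) \cap \cdots \cap \mathcal{H}(G_l)$ as in~(\ref{bos}) with $F \in \textbf{B} \subseteq \mathcal{H}(A)$. I must produce a point of $F$ lying in $Int(A)$. The idea is to pick a point $y \in F$ and modify it slightly to stay inside $\textbf{B}$ while forcing it into $A$; the natural candidate is a point near $F \cap G_j$ for some $j$, or, if $l = 0$, any point of $F$. The main obstacle — and the step deserving the most care — is to argue that one cannot escape $\mathcal{H}(A)$: I claim that if $\textbf{B} \subseteq \mathcal{H}(A)$, then in fact $A$ must be ``locally thick'' near the closed sets in $\textbf{B}$. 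Concretely, suppose for contradiction that $F \cap Int(A) = \emptyset$. Using that $E$ is locally compact Hausdorff (so points have compact neighbourhoods with arbitrarily small compact closures) and that $F \setminus Int(A) = F \setminus A$ modulo a boundary set, one constructs a closed set $F' \in \textbf{B}$ with $F' \cap A = \emptyset$, contradicting $\textbf{B} \subseteq \mathcal{H}(A)$. The construction of $F'$: if $l = 0$, take $F' = \{y\}$ for any $y \in F \setminus A$ (such $y$ exists because $F \in \mathcal{H}(A)$ forces $F \cap A \neq \emptyset$, but if moreover $F \cap Int(A) = \emptyset$ one still needs $F \nsubseteq A$, which follows by a separate small argument using that $F \in \textbf{B} \subseteq \mathcal{H}(A)$ must then fail for a perturbed compact complement — here local compactness is essential); if $l \geq 1$, for each $j$ choose $x_j \in F \cap G_j$, and since $x_j \notin Int(A)$ there are points outside $A$ arbitrarily close to $x_j$ inside $G_j$, so one can replace $F$ by a finite set $F' = \{x_1', \ldots, x_l'\}$ with $x_j' \in G_j \setminus A$ and $F' \cap K = \emptyset$ (using that $K$ is closed and $x_j \notin K$), yielding $F' \in \textbf{B}$ but $F' \cap A = \emptyset$.

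Thus the heart of the proof is the contradiction argument in the second inclusion: the combinatorics of the finitely many hitting conditions $\mathcal{H}(G_j)$ and the single missing condition $\mathcal{M}(K)$, together with local compactness of $E$ to guarantee enough room to perturb points of $F$ off of $A$ while respecting those constraints. Once that is in place, the two inclusions together give~(\ref{IntHA}).
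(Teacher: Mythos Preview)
Your overall strategy matches the paper's: the inclusion $\mathcal{H}(Int(A)) \subseteq Int_F(\mathcal{H}(A))$ is handled exactly as you do, and for the reverse inclusion the paper also builds a finite set $\{x_1,\ldots,x_l\}$ with $x_i \in G_i \cap K^c \cap A^c$ lying in $\textbf{B}$ but missing $A$. The paper phrases the contradiction slightly differently (it assumes $G_i \cap K^c \not\subseteq A$ for all $i$ rather than $F \cap Int(A)=\emptyset$, so the finite set is produced without reference to the particular $F$), but your route through the points $x_j \in F \cap G_j$ with $x_j \notin Int(A)$ is equally valid for $l \ge 1$.

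There is, however, a genuine gap in your treatment of the case $l=0$. Your attempt to find $y \in F \setminus A$ is both unnecessary and unjustified: there is no reason $F \nsubseteq A$ should follow from $F \cap Int(A)=\emptyset$, and the vague ``perturbed compact complement'' argument does not work. The correct and immediate observation is that $\emptyset \in \mathcal{M}(K)$ for every $K$, while $\emptyset \notin \mathcal{H}(A)$; hence $\mathcal{M}(K) \subseteq \mathcal{H}(A)$ is impossible and the case $l=0$ simply cannot occur. (The paper handles this by noting $F \in \mathcal{H}(A)$ forces $F \in \mathcal{H}(E)$, so one may always take $l \ge 1$ with $G_l = E$.)

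Finally, local compactness plays no role anywhere in the argument. For $l \ge 1$ all you use is that $x_j \notin Int(A)$ means every open neighbourhood of $x_j$ meets $A^c$, applied to the open neighbourhood $G_j \cap K^c$; this is pure point-set topology. You should drop the references to local compactness and to ``perturbing'' points --- the $x_j'$ need not be close to $x_j$, they just need to lie in $G_j \cap K^c \cap A^c$.
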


\begin{proof} W.l.o.g. $A$ is nonempty, because otherwise equation (\ref{IntHA}) is trivially fulfilled. We first prove the relation $\subseteq$. So, let $F \in Int_F(\mathcal{H}(A))$.
By definition of the interior there exists an $\textbf{O} \in \tau_F$ such that $F \in \textbf{O}$ and $\textbf{O} \subseteq \mathcal{H}(A)$.
Since every open set is the union of basic open sets we find a basic open set as in (\ref{bos}) such that
\begin{equation} \label{inHA}
 F \in \mathcal{M}(K) \cap \mathcal{H}(G_1) \cap \ldots \cap \mathcal{H}(G_l) \subseteq \mathcal{H}(A).
\end{equation}
In particular, $F \in \mathcal{H}(A)$, i.e., $F \cap A \neq \emptyset$, whence $F \neq \emptyset$, which is the same as $F \in \mathcal{H}(E)$.
Now, $E \in \mathcal{G}$ and therefore we can assume that $l \ge 1$. It follows that:
\begin{equation} \label{exi}
 \exists \; i \in \{1, \ldots,l\} : G_i \cap  K^c \subseteq A.
\end{equation}
We prove (\ref{exi}) by contradiction. For that purpose recall that, if $U$ and $V$ are subsets of $E$, then the following equivalence holds: $U \subseteq V \Leftrightarrow U \cap V^c = \emptyset$. So let us assume that (\ref{exi}) is not true, which means that
$G_i \cap K^c \cap A^c \neq \emptyset \quad \forall \; i \in \{1, \ldots,l\}.$ Consequently, for each $i \in \{1, \ldots,l\}$ there exists a point
$x_i$ with $x_i \in G_i, x_i \notin K$ and $x_i \notin A$. Introduce $H:=\{x_1, \ldots, x_l\}$. Then $H$ has the following properties:
$H \in \mathcal{F}, H \cap K = \emptyset$ (because all $x_i$ are not $K$) and $H \cap G_i \neq \emptyset \; \forall \; 1 \le i \le l$ (because
$x_i \in H \cap G_i$ for each $i$). Thus
$$
 H \in \mathcal{M}(K) \cap \mathcal{H}(G_1) \cap \ldots \mathcal{H}(G_l) \subseteq \mathcal{H}(A),
$$
where the last relation $\subseteq$ holds by (\ref{inHA}). It follows that $H \in \mathcal{H}(A)$, i.e., $H \cap A \neq \emptyset$ in contradiction to $x_i \notin A$ for all $i$. This is the proof of (\ref{exi}).

Put $G:=G_i \cap K^c$ with $G_i$ from (\ref{exi}). Then $G \in \mathcal{G}$ and $G \subseteq A$. Moreover:
\begin{equation} \label{FinHG}
\emptyset \neq F \cap G_i=(F \cap G_i \cap K^c) \cup (F \cap G_i \cap K)=F \cap G_i \cap K^c= F \cap G.
\end{equation}
Here, the first relation $\neq$ follows from $F \in \mathcal{H}(G_i)$ by (\ref{inHA}). The subsequent equality follows from the decomposition $E=K^c \cup K$. The next equality also follows from (\ref{inHA}), which implies that $F \in \mathcal{M}(K)$. Hence $F \cap K = \emptyset$ and so
$F \cap G_i \cap K = \emptyset$ a fortiori. Since $G \in \mathcal{G}$ and $G \subseteq A$, we have that $G \subseteq Int(A)$.
Deduce from (\ref{FinHG}) that $\emptyset \neq F \cap G \subseteq F \cap Int(A)$, whence $F \cap Int(A) \neq \emptyset$, which is the same as
$F \in \mathcal{H}(Int(A))$.

To see the reverse relation $\supseteq$ in (\ref{IntHA}) assume that $F \in \mathcal{H}(Int(A))$. In the sequel we use the following property of $\mathcal{H}(\cdot)$:
$\mathcal{H}(\bigcup_{A \in \mathcal{A}} A)= \bigcup_{A \in \mathcal{A}}\mathcal{H}(A)$ for every family $\mathcal{A}$ of subsets of $E$.
Apply this property to the family $\mathcal{A}=\{G \in \mathcal{G}: G \subseteq A\}$. It yields that
$$
 \mathcal{H}(Int(A)) = \mathcal{H}(\bigcup_{G \in \mathcal{A}} G) = \bigcup_{G \in \mathcal{A}} \mathcal{H}(G).
$$
Thus one finds an open $G$ with $G \subseteq A$ such that $F \in \mathcal{H}(G)$. Infer from $\mathcal{H}(G) \in \tau_F$ and $\mathcal{H}(G) \subseteq \mathcal{H}(A)$ that $\mathcal{H}(G) \subseteq Int_F(\mathcal{H}(A))$. Since $F \in \mathcal{H}(G)$ we finally obtain that $F \in Int_F(\mathcal{H}(A))$.
\end{proof}

We use Lemma \ref{intF} to describe the boundary of the hitting-sets of a compact set in the Fell-topology $\tau_F$.\\

\begin{lemma} \label{boundary} If $K$ is compact in $(E,\mathcal{G})$, then:
\begin{itemize}
\item[(i)] $\partial_F \mathcal{H}(K)= \mathcal{H}(K) \setminus \mathcal{H}(K^0)$.
\item[(ii)] $\partial_F \mathcal{H}(K)= \{F \in \mathcal{F}: \emptyset \neq F \cap K \subseteq \partial K\}$.
\end{itemize}
\end{lemma}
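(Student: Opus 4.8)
The plan is to read off both parts from Lemma \ref{intF}, using only the elementary facts that $\mathcal{H}(K)$ is itself $\tau_F$-closed and that a compact subset of a Hausdorff space is closed.

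First I would observe that, since $K \in \mathcal{K}$, the set $\mathcal{M}(K)$ belongs to $\mathcal{S}_{uF} \subseteq \tau_{uF} \subseteq \tau_F$, so $\mathcal{M}(K)$ is $\tau_F$-open and hence $\mathcal{H}(K) = \mathcal{F} \setminus \mathcal{M}(K)$ is $\tau_F$-closed. Consequently $cl_F(\mathcal{H}(K)) = \mathcal{H}(K)$, and therefore
\begin{equation*}
\partial_F \mathcal{H}(K) = cl_F(\mathcal{H}(K)) \setminus Int_F(\mathcal{H}(K)) = \mathcal{H}(K) \setminus Int_F(\mathcal{H}(K)).
\end{equation*}
Applying Lemma \ref{intF} with $A := K$ gives $Int_F(\mathcal{H}(K)) = \mathcal{H}(Int(K)) = \mathcal{H}(K^0)$, and combining the last two displays yields (i).

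For (ii) I would use that $K$, being compact in the Hausdorff space $(E,\mathcal{G})$, is closed, so $K = K^0 \cup \partial K$ with $K^0 \cap \partial K = \emptyset$. Then for $F \in \mathcal{F}$, membership in $\mathcal{H}(K) \setminus \mathcal{H}(K^0)$ means exactly $F \cap K \neq \emptyset$ and $F \cap K^0 = \emptyset$. From the disjoint decomposition of $K$ this is equivalent to $\emptyset \neq F \cap K = F \cap \partial K$, i.e. to $\emptyset \neq F \cap K \subseteq \partial K$; conversely, if $\emptyset \neq F \cap K \subseteq \partial K$ then $F \in \mathcal{H}(K)$ and $F \cap K^0 \subseteq (F \cap K) \cap K^0 \subseteq \partial K \cap K^0 = \emptyset$, so $F \notin \mathcal{H}(K^0)$. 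Together with (i) this proves (ii).

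There is no real obstacle here: the entire content is carried by Lemma \ref{intF}, and the remaining work is the set-theoretic bookkeeping with the partition $K = K^0 \cup \partial K$. The only point worth stating explicitly is the closedness of $\mathcal{H}(K)$ in $\tau_F$, which is what lets one replace $cl_F(\mathcal{H}(K))$ by $\mathcal{H}(K)$ in the definition of the boundary.
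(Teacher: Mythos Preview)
Your proposal is correct and follows essentially the same route as the paper: both deduce (i) from the closedness of $\mathcal{H}(K)$ together with Lemma~\ref{intF}, and then obtain (ii) by the elementary set-theoretic equivalence between $F\cap K^0=\emptyset$ and $F\cap K\subseteq\partial K$ under the decomposition $K=K^0\cup\partial K$. Your presentation of (ii) via the disjoint partition is a touch more streamlined than the paper's explicit use of $U\subseteq V \Leftrightarrow U\cap V^c=\emptyset$, but the content is identical.
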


\begin{proof} (i) By definition $\partial_F \mathcal{H}(K)=cl_F(\mathcal{H}(K)) \setminus Int_F(\mathcal{H}(K))$.
Here, $cl_F(\mathcal{H}(K))=\mathcal{H}(K)$, because $\mathcal{H}(K)$ is $\tau_F$-closed as $\mathcal{H}(K)^c = \mathcal{M}(K) \in \tau_F$.
Moreover, $Int_F(\mathcal{H}(K))=\mathcal{H}(K^0)$ by Lemma \ref{intF}, which now results in the equality (i).\\

(ii) By (i)
$$
 \partial_F \mathcal{H}(K)= \mathcal{H}(K) \cap \mathcal{M}(K^0) = \{F \in \mathcal{F}: F \cap K \neq \emptyset, F \cap K^0 =\emptyset\}.
$$
Consequently, we have to show: If $F \in \mathcal{F}$ satisfies $F \cap K \neq \emptyset$, then the following equivalence holds:
$$
 F \cap K^0 = \emptyset \quad \Leftrightarrow \quad F \cap K \subseteq \partial F
$$
To see the if-part let $x \in F \cap K$, i.e., $x \in F$ and $x \in K$. We have to prove that
$x \in \partial F = \overline{K} \setminus K^0 = K \cap (K^0)^c$. Since $x \in K$, it suffices to show that
$x \notin K^0$. Assume that $x \in K^0$. Then $x \in F \cap K^0$ in contradiction to $F \cap K^0 = \emptyset$.
For the only-if-part recall the equivalence $U \subseteq V \Leftrightarrow U \cap V^c = \emptyset$.
By assumption $F \cap K \subseteq \partial F$. Since $\partial F = K \cap (K^0)^c$ the equivalence yields
\begin{eqnarray*}
 \emptyset &=& F \cap K \cap (K \cap (K^0)^c)^c = F \cap K \cap (K^c \cup K^0)= (F \cap K \cap K^c)\cup (F \cap K \cap K^0)\\
 &=& F \cap K \cap K^0 = F \cap K^0
\end{eqnarray*}
\end{proof}

In the proof of Theorem \ref{thm1} the set $R(K)=\{r>0: P(\partial_F \mathcal{H}(K^r))=0\}$ plays an important role.\\

\begin{lemma} \label{dense} If $K$ is compact, then the complement $R(K)^c$ of $R(K)$ is at most countable.
As a consequence $R(K)$ lies dense in $[0,\infty)$.
\end{lemma}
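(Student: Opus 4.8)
The plan is to exhibit $\{\partial_F\mathcal{H}(K^r):r>0\}$ as a pairwise disjoint family of Borel subsets of $(\mathcal{F},\tau_F)$. Granting this, the countability of $R(K)^c=\{r>0:P(\partial_F\mathcal{H}(K^r))>0\}$ is the elementary fact that a finite measure charges at most countably many members of a disjoint family: writing $R(K)^c=\bigcup_{n\ge1}\{r>0:P(\partial_F\mathcal{H}(K^r))\ge 1/n\}$, additivity forces each set in this union to have at most $n$ elements. The density statement then follows at once, since $R(K)=(0,\infty)\setminus R(K)^c$ arises from $(0,\infty)$ by deleting an at most countable set; this leaves a dense subset of $(0,\infty)$, and since only countably many radii are removed one may still select a sequence in $R(K)$ decreasing to $0$, so $\overline{R(K)}=[0,\infty)$.

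For the measurability needed to make all of this meaningful: by Lemma \ref{Kr} each $K^r$ is compact, hence $\mathcal{H}(K^r)$ is $\tau_F$-closed (its complement $\mathcal{M}(K^r)$ lies in $\mathcal{S}_F$) and $\mathcal{H}((K^r)^0)$ is $\tau_F$-open (because $(K^r)^0\in\mathcal{G}$). By Lemma \ref{boundary}(i), applied to the compact set $K^r$, one has $\partial_F\mathcal{H}(K^r)=\mathcal{H}(K^r)\setminus\mathcal{H}((K^r)^0)=\mathcal{H}(K^r)\cap\mathcal{M}((K^r)^0)\in\mathcal{B}_F$, which is the domain of $P$.

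The heart of the proof is the disjointness, and the key geometric observation is that membership of $F$ in $\partial_F\mathcal{H}(K^r)$ pins down $r$ as the distance of $F$ to $K$. Fix $r>0$ and take $F\in\partial_F\mathcal{H}(K^r)=\mathcal{H}(K^r)\cap\mathcal{M}((K^r)^0)$. From $F\in\mathcal{H}(K^r)$ we get $F\neq\emptyset$ and some $x\in F$ with $d(x,K)\le r$. On the other hand, the open $r$-neighborhood satisfies $K^{r-}\subseteq (K^r)^0$ (being an open subset of $K^r$), so the condition $F\cap (K^r)^0=\emptyset$ forces $d(y,K)\ge r$ for every $y\in F$. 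Hence $\inf_{y\in F}d(y,K)=r$, a quantity that depends only on $F$. Therefore, for $r\neq s$ the sets $\partial_F\mathcal{H}(K^r)$ and $\partial_F\mathcal{H}(K^s)$ cannot share an element, i.e.\ the family is pairwise disjoint, which finishes the argument.

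I expect the only delicate point to be the inclusion $K^{r-}\subseteq (K^r)^0$ and the ensuing remark that $F\cap(K^r)^0=\emptyset$ excludes \emph{all} points of $F$ lying within distance $<r$ of $K$; once this is noticed the uniqueness of $r$ is automatic and the rest is bookkeeping together with Lemma \ref{boundary}(i).
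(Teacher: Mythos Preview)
Your proof is correct and follows the same approach as the paper: establish pairwise disjointness of the family $\{\partial_F\mathcal{H}(K^r):r>0\}$, then use the standard fact that a probability measure can charge at most countably many members of a disjoint family. The only cosmetic difference lies in the disjointness step---the paper invokes Lemma~\ref{boundary}(ii) together with $\partial K^r\subseteq\{x:d(x,K)=r\}$ to reach a contradiction from $r<s$, whereas you use Lemma~\ref{boundary}(i) and the inclusion $K^{r-}\subseteq(K^r)^0$ to identify $r=\inf_{y\in F}d(y,K)$ as an invariant of $F$---but the substance is identical.
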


\begin{proof} The key argument for the proof is to show that
the sets $\partial_F \mathcal{H}(K^r), r>0$, are pairwise disjoint.
Indeed, assume that this is not true. Then there exist two reals
$0<r<s$ such that $\partial_F \mathcal{H}(K^r) \cap \partial_F \mathcal{H}(K^s) \neq \emptyset$.
Therefore we find a set $F \in \mathcal{F}$ with $F \in \partial_F \mathcal{H}(K^r)$ and $F \in \partial_F \mathcal{H}(K^s)$.
By Lemma \ref{boundary} (ii) this means that $F$ satisfies the following two relations: (a) $\emptyset \neq F \cap K^r \subseteq \partial K^r$ and
(b) $\emptyset \neq F \cap K^s \subseteq \partial K^s$.

Now, $\partial K^r \subseteq \{x \in E: d(x,K)=r\}$ for all $r>0$. To see this first observe
that $\partial K^r =K^r \cap \overline{(K^r)^c}$ by Lemma \ref{Kr}. So, if $x \in \partial K^r$, then $d(x,K) \le r$ and there
exists a net $(x_\alpha)$ converging to $x$ with $d(x_\alpha,K)>r$ for all indices $\alpha$. But $d(\cdot,K)$ is continuous, whence
$d(x,K) \ge r$. Thus (a) and (b) imply (c) $\emptyset \neq F \cap K^r \subseteq \{x \in E: d(x,K)=r\}$ and (d) $\emptyset \neq F \cap K^s \subseteq \{x \in E: d(x,K)=s\}$.

By (c) there exists a point $x \in F \cap K^r$ with $d(x,K)=r$. Since $r<s$ and therefore $K^r \subseteq K^s$, $x$ a fortiori lies in $F \cap K^s$,
so that from (d) we can conclude that $d(x,K)=s$. It follows that $r=s$ in contradiction to $r<s$.

Next, observe that $R(K)^c =\{r>0: P(\partial_F \mathcal{H}(K^r))>0\}= \bigcup_{m \in \mathbb{N}} E_m$ with $E_m=\{r>0:P(\partial_F \mathcal{H}(K^r)) \ge 1/m\}$.
Here, $E_m$ contains at most $m$ elements, because otherwise we find at least $m+1$ positive numbers $r_1,\ldots,r_{m+1}$ with
$P(\partial_F \mathcal{H}(K^{r_j})) \ge 1/m$ for all $1 \le j \le m+1$. Herewith we arrive at
$$
 1 \ge P(\bigcup_{j=1}^{m+1} \partial_F \mathcal{H}(K^{r_j}))= \sum_{j=1}^{m+1} P(\partial_F \mathcal{H}(K^{r_j}) \ge (m+1) \frac{1}{m}>1,
$$
a contradiction. (Note that here the pairwise disjointness is essential, because it ensures the equality.)
Thus $R(K)^c = \bigcup_{m \in \mathbb{N}} E_m$ is denumerable. As to the second assertion of the lemma assume that
$R(K)^c$ is not dense in $[0, \infty)$. Then there exists a point $x \in [0,\infty)$ and a non-degenerate interval $I$ containing $x$ with
$I \cap R(K)=\emptyset$, which is the same as $I \subseteq R(K)^c$. It follows that $I$ is denumerable, a contradiction.
\end{proof}

If $\mathcal{A}$ is a family of subsets of $E$, then
\begin{equation} \label{Hsubset}
\mathcal{H}(\bigcap_{A \in \mathcal{A}} A) \subseteq \bigcap_{A \in \mathcal{A}}\mathcal{H}(A).
\end{equation}
Our next lemma gives a condition which ensures equality.\\

\begin{lemma} \label{Heq} If $(K_j)_{j \in \mathbb{N}}$ is a sequence of compact sets with $K_j \downarrow K \in \mathcal{K}$, then
$\mathcal{H}(K)= \bigcap_{j \in \mathbb{N}} \mathcal{H}(K_j)$.
\end{lemma}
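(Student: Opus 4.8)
The plan is to prove the two inclusions separately, the first being immediate and the second requiring a compactness argument.

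For the inclusion $\mathcal{H}(K) \subseteq \bigcap_{j \in \mathbb{N}} \mathcal{H}(K_j)$: since $K_j \downarrow K$ means precisely $K = \bigcap_{j} K_j$, this is exactly the relation \eqref{Hsubset} applied to the family $\mathcal{A} = \{K_j : j \in \mathbb{N}\}$. Alternatively one can argue directly: $K \subseteq K_j$ gives $\mathcal{H}(K) \subseteq \mathcal{H}(K_j)$ for every $j$, hence $\mathcal{H}(K)$ is contained in the intersection. No use of compactness is needed here.

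For the reverse inclusion, fix $F \in \bigcap_{j} \mathcal{H}(K_j)$ and put $L_j := F \cap K_j$. Each $L_j$ is non-empty (since $F \in \mathcal{H}(K_j)$), closed (as an intersection of the closed sets $F$ and $K_j$), and contained in the compact set $K_j$, hence itself compact; moreover $L_{j+1} \subseteq L_j$ because $K_{j+1} \subseteq K_j$. So $(L_j)_{j}$ is a decreasing sequence of non-empty compact subsets of the Hausdorff space $E$. Every finite subfamily has non-empty intersection, namely $L_{j_1} \cap \cdots \cap L_{j_n} = L_{\max_i j_i} \neq \emptyset$, so by the finite intersection property (the $L_j$ are closed subsets of the compact set $L_1$) we get $\bigcap_{j} L_j \neq \emptyset$. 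Since $\bigcap_{j} L_j = F \cap \bigcap_{j} K_j = F \cap K$, it follows that $F \cap K \neq \emptyset$, i.e. $F \in \mathcal{H}(K)$, completing the proof.

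The only step requiring care is the passage $\bigcap_j L_j \neq \emptyset$, which is where the hypothesis that the $K_j$ are genuinely compact (not merely closed) enters: without it the reverse inclusion is false, as the example $K_j = [j,\infty) \subseteq \mathbb{R}$ with $F = \mathbb{R}$ shows. Everything else is routine manipulation of the operators $\mathcal{H}(\cdot)$ and of finite/countable intersections.
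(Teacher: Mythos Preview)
Your proof is correct and follows essentially the same strategy as the paper's: the inclusion $\mathcal{H}(K)\subseteq\bigcap_j\mathcal{H}(K_j)$ comes from \eqref{Hsubset}, and the reverse inclusion uses compactness to show that the decreasing family $F\cap K_j$ has non-empty intersection $F\cap K$. The only difference is cosmetic --- you invoke the finite intersection property for the closed sets $L_j=F\cap K_j$ inside the compact set $L_1$, whereas the paper picks points $y_j\in F\cap K_j$, extracts a convergent subsequence in $K_1$, and shows the limit lies in every $F\cap K_n$; your formulation is arguably cleaner and does not rely on metrizability of $E$.
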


\begin{proof} By (\ref{Hsubset}) it remains to show that $\bigcap_{j \in \mathbb{N}} \mathcal{H}(K_j) \subseteq \mathcal{H}(K)$.
So, let $F \in \bigcap_{j \in \mathbb{N}} \mathcal{H}(K_j)$. Then for every $j \in \mathbb{N}$ there exists a point $y_j \in F \cap K_j \ne \emptyset$. In particular, $(y_j)_{j \in \mathbb{N}}$ is a sequence in $K_1$, because (*) $K_1 \supseteq K_2 \supseteq \ldots$ by assumption.
Since $K_1$ is compact, $(y_j)_{j \in \mathbb{N}}$ has a convergent subsequence. For notational simplicity we assume that $y_j \rightarrow y \in K_1$. It follows from (*) that $y_j \in F \cap K_j \subseteq F \cap K_n$ for all $j \ge n$ and all $n \in \mathbb{N}$. Thus $(y_j)_{j \ge n}$ is a sequence in $F \cap K_n \in \mathcal{F}$ for all $n \in \mathbb{N}$. By closedness the limit $y$ lies in $F \cap K_n$ for all $n \in \mathbb{N}$, which in turn means that $y \in \bigcap_{n \in \mathbb{N}} (F \cap K_n) = F \cap \bigcap_{n \in \mathbb{N}} K_n = F \cap K$ by assumption. Consequently, $F \cap K$ is non-empty as it contains $y$ and therefore $F \in \mathcal{H}(K)$.
\end{proof}

Since $\mathcal{H}(\cdot)$ is monotone increasing with respect to $\subseteq$, the assertion in Lemma \ref{Heq} can be rewritten as
$\mathcal{H}(K_j) \downarrow \mathcal{H}(K)$.\\

Recall the family $\textbf{F}_{0,1}$ of all singletons inclusive the empty set. It is a Borel-set: $\textbf{F}_{0,1} \in \mathcal{B}_{F} = \sigma(\tau_F)$. This follows from the following lemma.\\

\begin{lemma} \label{F01} The set $\textbf{F}_{0,1}$ is closed in $(\mathcal{F},\tau_F)$.
\end{lemma}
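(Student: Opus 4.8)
The plan is to show that the complement $\mathbf{F}_{0,1}^c=\{F\in\mathcal{F}: F \text{ has at least two points}\}$ is open in $(\mathcal{F},\tau_F)$, from which the closedness of $\mathbf{F}_{0,1}$ follows immediately. Fix an arbitrary $F\in\mathbf{F}_{0,1}^c$; then there are two distinct points $x,y\in F$. Since $(E,\mathcal{G})$ is Hausdorff, I would pick disjoint open sets $G_1,G_2\in\mathcal{G}$ with $x\in G_1$ and $y\in G_2$.

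Next I would consider the set $\mathbf{U}:=\mathcal{H}(G_1)\cap\mathcal{H}(G_2)$. By construction of $\tau_F$, each $\mathcal{H}(G_i)$ is a subbasic open set, so $\mathbf{U}$ is open, being a finite intersection of members of $\mathcal{S}_F$. Clearly $F\in\mathbf{U}$, since $x\in F\cap G_1$ and $y\in F\cap G_2$. The key verification is the inclusion $\mathbf{U}\subseteq\mathbf{F}_{0,1}^c$: if $F'\in\mathbf{U}$, then $F'\cap G_1\neq\emptyset$ and $F'\cap G_2\neq\emptyset$, so $F'$ contains a point $u\in G_1$ and a point $v\in G_2$; because $G_1\cap G_2=\emptyset$ we have $u\neq v$, hence $F'$ has at least two elements and $F'\in\mathbf{F}_{0,1}^c$. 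Thus $\mathbf{U}$ is an open neighborhood of $F$ contained in $\mathbf{F}_{0,1}^c$.

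Since $F\in\mathbf{F}_{0,1}^c$ was arbitrary, $\mathbf{F}_{0,1}^c$ is open and therefore $\mathbf{F}_{0,1}$ is $\tau_F$-closed. There is no real obstacle here: the only ingredient beyond the definitions is the Hausdorff property of $(E,\mathcal{G})$, which separates the two points of $F$ by disjoint opens; everything else is a direct unwinding of the definition of the Fell-topology and of the hitting sets $\mathcal{H}(\cdot)$.
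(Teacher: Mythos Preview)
Your proof is correct and takes a genuinely different route from the paper's. The paper argues via sequential closure: it invokes the metrizability of $(\mathcal{F},\tau_F)$, takes a sequence $(F_n)$ in $\mathbf{F}_{0,1}$ converging to some $F$, identifies the Fell limit with the Painlev\'e--Kuratowski limit $\liminf_n\{x_n\}$, and then uses that in a Hausdorff space a sequence has at most one limit point. Your argument instead shows directly that the complement is open by exhibiting, for each $F$ with two distinct points $x,y$, the basic open neighborhood $\mathcal{H}(G_1)\cap\mathcal{H}(G_2)\subseteq\mathbf{F}_{0,1}^c$ with $G_1,G_2$ disjoint opens separating $x$ and $y$. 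Your approach is more elementary: it needs only the Hausdorff property of $E$ and the definition of the subbase $\mathcal{S}_F$, and it avoids metrizability (hence second countability) and the detour through Painlev\'e--Kuratowski limits altogether. The paper's proof, by contrast, leans on background structure that is available in its standing hypotheses but is not actually needed for this particular lemma.
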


\begin{proof} Since $(\mathcal{F},\tau_F)$ is metrizable, we can argue with sequences. So, let $(F_n)$ be a sequence in $\textbf{F}_{0,1}$ with
$F_n \rightarrow F$ in $(\mathcal{F},\tau_F)$. If $F = \emptyset$, then $F \in \textbf{F}_{0,1}$ and we are ready. Assume that $F \neq \emptyset$.
We have to prove that $F$ is a singleton. Let $G \in \mathcal{G}$ with $F \cap G \neq \emptyset$ (as for instance $G=E$). Then there exists a natural number $n_0$ such that $F_n \cap G \neq \emptyset$ for all $n \ge n_0$. Since all $F_n$ are either empty or a singleton, we now know that these
$F_n$ are singletons. Consequently, for every $n \ge n_0$ there exists a point $x_n \in E$ with $F_n=\{x_n\}$. Since convergence in the Fell-topology is equivalent with convergence in sense of Painlev\'{e}-Kuratowski, it follows that $F=\liminf_{n \rightarrow \infty} \{x_n\}$, where $\liminf_{n \rightarrow \infty} A_n$ denotes the lower limit of a sequence $(A_n)_{n \in \mathbb{N}}$ of sets, confer, e.g., Theorem C.7 in Molchanov \cite{Molchanov}.
It follows from the definition of the lower limit that $F$ is the set of all limit points of the sequence $(x_n)$. Now, $E$ is Hausdorff and therefore
$F$ is a singleton.
\end{proof}

Since $\tau_F \supseteq \tau_{uF}$, convergence in the Fell-topology entails that in the upper Fell-topology.
As to the reverse we have:\\

\begin{lemma} \label{reverse} Let $(F_\alpha)$ be a net in $\mathcal{F}$.
\begin{itemize}
\item[(a)] Assume that
  \begin{itemize}
  \item[(i)] $F_\alpha \rightarrow F=\{x\}$ in $(\mathcal{F},\tau_{uF})$ with $x \in E$.
  \item[(ii)] There exists a $K \in \mathcal{K}$ and $\alpha_0 \in A$ such that $\emptyset \neq F_\alpha \subseteq K$ for all $\alpha \ge \alpha_0$.
  \end{itemize}
Then $F_\alpha \rightarrow F$ in $(\mathcal{F},\tau_{F})$.
\item[(b)]
$$F_\alpha \rightarrow \emptyset \text{ in } (\mathcal{F},\tau_{uF}) \quad \Leftrightarrow \quad F_\alpha \rightarrow \emptyset \text{ in } (\mathcal{F},\tau_{F})$$
\end{itemize}
\end{lemma}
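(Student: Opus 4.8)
The plan is to treat the two parts separately, doing (b) first since its idea feeds into (a). For (b), the implication "$\Leftarrow$" is immediate, because $\tau_F \supseteq \tau_{uF}$ and hence any net converging in the finer topology converges in the coarser one. For "$\Rightarrow$" I would note that no subbasic hitting set $\mathcal{H}(G)$, $G \in \mathcal{G}$, contains $\emptyset$ (since $\emptyset \cap G = \emptyset$), whereas $\emptyset \in \mathcal{M}(K)$ for every $K \in \mathcal{K}$. By the description (\ref{bos}) of the basic $\tau_F$-open sets, a basic $\tau_F$-neighbourhood of $\emptyset$ must therefore have no $\mathcal{H}(G_i)$-factors, i.e. it is already of the form $\mathcal{M}(K)$ with $K$ compact, which is precisely a basic $\tau_{uF}$-neighbourhood of $\emptyset$. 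Thus $\emptyset$ has the same neighbourhood filter in both topologies, so convergence of $(F_\alpha)$ to $\emptyset$ means the same in each, which is (b).

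For (a), the observation is that $F_\alpha \rightarrow \{x\}$ in $(\mathcal{F},\tau_{uF})$ already yields "$F_\alpha \in \mathcal{M}(K')$ eventually" for every compact $K'$ with $x \notin K'$, so in view of the subbase $\mathcal{S}_F = \{\mathcal{M}(K): K \in \mathcal{K}\} \cup \{\mathcal{H}(G): G \in \mathcal{G}\}$ of $\tau_F$ it remains only to check that for every open $G$ with $x \in G$ one has $F_\alpha \in \mathcal{H}(G)$ eventually. I would argue by contradiction: if this fails for some such $G$, then $A' := \{\alpha \ge \alpha_0 : F_\alpha \cap G = \emptyset\}$ is cofinal in $A$ — here directedness of $A$ lets me intersect the cofinal set $\{\alpha : F_\alpha \cap G = \emptyset\}$ with the tail $\{\alpha \ge \alpha_0\}$ without destroying cofinality. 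For $\alpha \in A'$ assumption (ii) gives $\emptyset \neq F_\alpha \subseteq K \cap G^c =: K'$; the set $K'$ is compact, being a closed subset of the compact set $K$, and $x \notin K'$ since $x \in G$. Hence $\mathcal{M}(K')$ is a $\tau_{uF}$-open neighbourhood of $\{x\}$, so by (i) there is $\beta \in A$ with $F_\alpha \cap K' = \emptyset$ for all $\alpha \ge \beta$. Picking $\alpha \in A'$ with $\alpha \ge \beta$ (possible by cofinality) yields $\emptyset \neq F_\alpha \subseteq K'$ and at the same time $F_\alpha \cap K' = \emptyset$, the desired contradiction.

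I expect the only delicate points to be bookkeeping: correctly phrasing "not eventually in $\mathcal{H}(G)$" as cofinality of $A'$ and combining it with the tail $\{\alpha \ge \alpha_0\}$, and making explicit that hypothesis (ii) is exactly what upgrades the merely closed set $G^c$ to the compact set $K' = K \cap G^c$ — which is what makes $\mathcal{M}(K')$ available as a $\tau_{uF}$-neighbourhood and drives the contradiction. No genuine analytic difficulty arises; in particular compactness of $K'$ comes purely from closedness inside $K$, and metrizability of $E$ is not needed for this lemma.
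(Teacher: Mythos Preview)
Your proof is correct and essentially the same as the paper's: for (b) you both note that $\emptyset \notin \mathcal{H}(G)$ forces every subbasic $\tau_F$-neighbourhood of $\emptyset$ to already lie in $\tau_{uF}$, and for (a) you both use the compact set $K' = K \setminus G$ (the paper's $K_1$) so that $\{x\} \in \mathcal{M}(K')$ together with (ii) yields $F_\alpha \cap G \neq \emptyset$ eventually. The only difference is cosmetic: the paper argues (a) directly (from $F_\alpha \cap K_1 = \emptyset$ and $F_\alpha \subseteq K$ it deduces $F_\alpha \cap G^c = \emptyset$, hence $F_\alpha = F_\alpha \cap G \neq \emptyset$), whereas you phrase the same step as a contradiction along a cofinal subnet.
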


\begin{proof} (a) Let $\textbf{S} \in \mathcal{S}_F=\{\mathcal{M}(K):K \in \mathcal{K}\}\cup \{\mathcal{H}(G): G \in \mathcal{G}\}$ be a subbase-neighborhood of $F$. If $\textbf{S}=\mathcal{M}(K_0)$ with $K_0 \in \mathcal{K}$, then $\textbf{S} \in \tau_{uF}$, whence by (i) there exists an $\alpha_1 \in A$ such that
$F_\alpha \in \textbf{S}$ for all $\alpha \ge \alpha_1$. If $\textbf{S}=\mathcal{H}(G)$ with $G \in \mathcal{G}$, then $x \in G$. For $K_1:=K \setminus G$ with $K$ as in (ii) we know that it is compact and that $x \notin K_1$. Consequently $F = \{x\} \in \mathcal{M}(K_1) \in \tau_{uF}$.
By (i) there exists an $\alpha_2 \in A$ such that $F_\alpha \in \mathcal{M}(K_1)$ for all $\alpha \ge \alpha_2$. Conclude that
$$
 \emptyset = F_\alpha \cap K_1 = F_\alpha \cap (K \setminus G)=F_\alpha \cap K \cap G^c=F_\alpha \cap G^c \quad \forall \; \alpha \ge \alpha_0,\alpha_2,
$$
because $F_\alpha \cap K = F_\alpha$ by (ii). Herewith it follows that
$$
 \emptyset \neq F_\alpha= (F_\alpha \cap G) \cup (F_\alpha \cap G^c)=F_\alpha \cap G \quad \forall \; \alpha \ge \alpha_0,\alpha_2
$$
and thus $F_\alpha \in \mathcal{H}(G)=\textbf{S}$ for all $\alpha \ge \alpha_3$ with some $\alpha_3 \ge \alpha_0,\alpha_2$. Summing up we arrive at
$F_\alpha \rightarrow F$ in $(\mathcal{F},\tau_F)$.

(b) It remains to prove the implication $\Rightarrow$. But this follows immediately, because every subase-neighborhhod lies in $\mathcal{S}_{uF}$
as $\emptyset \notin \mathcal{H}(G)$ for every open $G$.
\end{proof}

\textbf{Notes}

If $E$ is a linear space with a metric $d$, then the statements in Lemmas 4-7 can be found in Salinetti and Wets \cite{SalWe}.
More precisely, Lemma \ref{intF} is presented in (1.9) on p. 389 in the special case that $A$ is compact, whereas we allow $A$ to be an arbitrary subset of $E$. Lemma \ref{boundary} is given in (1.10) on the same page, but without proof. Similarly, the statement of Lemma \ref{Kr} is a little hidden in the line directly before Corollary 1.13 on p.390, again without proof. Furthermore, our Lemma \ref{dense} coincides with the just mentioned Corollary 1.13.
Here, Salinetti and Wets \cite{SalWe} use a completely different technique to prove it, however the argument only works if $E$ is actually a normed linear space.
In addition, Lemma \ref{Heq} is used in Molchanov \cite{Molchanov} on p.7 without any justification. Finally, as far as Lemmata \ref{F01} and \ref{reverse} are concerned, due to their special character, we assume that these are new findings.

\vspace{1cm}
\textbf{Declarations}\\

\textbf{Compliance with Ethical Standards}: I have read and I understand the provided information.\\

\textbf{Competing Interests}: The author has no competing interests to declare that are relevant to the content of
this article.


\end{document}